\newtheorem{theorem}{Theorem}
\newtheorem{lemma}[theorem]{Lemma}
\newtheorem{corollary}[theorem]{Corollary}
\newtheorem{proposition}[theorem]{Proposition}
\newtheorem{definition}{Definition}
\newtheorem{example}[theorem]{Example}
\def\endproof{\vbox{\hrule height0.6pt\hbox{%
   \vrule height1.3ex width0.6pt\hskip0.8ex
   \vrule width0.6pt}\hrule height0.6pt
  }}
\newcommand{\ones}{\mathbf{1}}
\newcommand{\bQ}{\mathbf{Q}}
\newcommand{\bY}{\mathbf{Y}}
\newcommand{\bx}{\mathbf{x}}
\newcommand{\by}{\mathbf{y}}
\newcommand{\bz}{\mathbf{z}}
\newcommand{\bc}{\mathbf{c}}
\newcommand{\bu}{\mathbf{u}}
\newcommand{\bv}{\mathbf{v}}
\newcommand{\bw}{\mathbf{w}}
\newcommand{\sage}{\mathrm{SAGE}}
\newcommand{\age}{\mathrm{AGE}}
\newcommand{\diff}{\backslash}
\newcommand{\bzero}{\mathbf{0}}
\newcommand{\bones}{\mathbf{1}}
\newcommand{\R}{\mathbb{R}}
\newcommand{\Q}{\mathbb{Q}}
\newcommand{\K}{\mathcal{K}}
\newcommand{\bnu}{\boldsymbol \nu}
\newcommand{\blambda}{\boldsymbol \lambda}
\newcommand{\bdelta}{\boldsymbol \delta}
\newcommand{\bmu}{\boldsymbol \mu}
\newcommand{\bzeta}{\boldsymbol \zeta}
\newcommand{\balpha}{\boldsymbol \alpha}
\newcommand{\bgamma}{\boldsymbol \gamma}
\newcommand{\btau}{\boldsymbol \tau}
\title{Relative Entropy Relaxations for Signomial Optimization}
\author{Venkat Chandrasekaran$^c$ and Parikshit Shah$^w$ \thanks{Email: venkatc@caltech.edu, pshah@discovery.wisc.edu} \vspace{0.25in} \\ $^c$ Departments of Computing and Mathematical Sciences and of Electrical Engineering\\ California Institute of Technology \\ Pasadena, CA 91125  \vspace{0.1in} \\ $^w$ Wisconsin Institutes for Discovery\\ University of Wisconsin\\ Madison, WI 53715}
\date{September 25, 2014}
\begin{document}

\maketitle

\begin{abstract}
Signomial programs (SPs) are optimization problems specified in terms of signomials, which are weighted sums of exponentials composed with linear functionals of a decision variable.  SPs are non-convex optimization problems in general, and families of NP-hard problems can be reduced to SPs.  In this paper we describe a hierarchy of convex relaxations to obtain successively tighter lower bounds of the optimal value of SPs.  This sequence of lower bounds is computed by solving increasingly larger-sized relative entropy optimization problems, which are convex programs specified in terms of linear and relative entropy functions.  Our approach relies crucially on the observation that the relative entropy function -- by virtue of its joint convexity with respect to both arguments -- provides a convex parametrization of certain sets of globally nonnegative signomials with efficiently computable nonnegativity certificates via the arithmetic-geometric-mean inequality.  By appealing to representation theorems from real algebraic geometry, we show that our sequences of lower bounds converge to the global optima for broad classes of SPs.  Finally, we also demonstrate the effectiveness of our methods via numerical experiments.
\end{abstract}

\emph{Keywords}: arithmetic-geometric-mean inequality; convex optimization; geometric programming; global optimization; real algebraic geometry.


\pagestyle{myheadings} \thispagestyle{plain} \markboth{V. Chandrasekaran and P. Shah} {Relative Entropy Relaxations for Signomial Optimization}

\section{Introduction} \label{sec:intro}

A \emph{signomial} is a weighted sum of exponentials composed with linear functionals of a variable $\bx \in \R^n$:
\begin{equation}
f(\bx) = \sum_{j=1}^\ell \bc_j \exp\left\{{\balpha^{(j)}}' \bx\right\} \label{eq:sumexp}
\end{equation}
Here $\bc_j \in \R$ and $\balpha^{(j)} \in \R^n$ are fixed parameters.\footnote{In the literature \cite{DufPZ1967}, signomials are typically parametrized somewhat differently as weighted sums of generalized ``monomials.''  A monomial consists of a product of variables, each raised to an arbitrary real power, and the variables only take on positive values.  It is straightforward to transform such functions to sums of exponentials of the type \eqref{eq:sumexp} discussed in this paper; see \cite{BoyKVH2007,DufPZ1967}.}  A signomial program (SP) is an optimization problem in which a signomial is minimized subject to constraints on signomials, all in a decision variable $\bx$ \cite{DufP1973,DufPZ1967}.  SPs are non-convex in general, and they include NP-hard problems as special cases \cite{Chi2005}.  Consequently, we do not expect to obtain globally optimal solutions of general SPs in a computationally tractable manner.  This paper describes a hierarchy of convex relaxations based on relative entropy optimization for obtaining lower bounds of the optimal value of an SP.

Geometric programs (GPs) constitute a prominent subclass of SPs in which a \emph{posynomial} -- a signomial with positive coefficients, i.e., the $\bc_j$'s are all positive -- is minimized subject to upper-bound constraints on posynomials \cite{BoyKVH2007,DufPZ1967}.  GPs are convex optimization problems that can be solved efficiently, and they have been successfully employed in a wide range of applications such as power control in communication systems \cite{Chi2005}, circuit design \cite{BoyKPH2005}, approximations to the matrix permanent \cite{LinSW2000}, and the computation of capacities of point-to-point communication channels \cite{ChiB2004}.  However, the additional flexibility provided by SPs via constraints on arbitrary signomials rather than just posynomials is useful in a range of problems to which GPs are not directly applicable.  Examples include resource allocation in networks \cite{Chi2005}, control problems involving chemical processes (see the extensive reference list in \cite{MarF1997}), spatial frame design \cite{YunX1997}, and certain nonlinear flow problems in graphs \cite{Pet1976}.

\subsection{Our Contributions}\label{subsec:contrib}
Central to the development in this paper is a view of global minimization that is grounded in duality: globally minimizing a signomial is computationally equivalent to the problem of certifying global nonnegativity of a signomial.  Although certifying global nonnegativity of a general signomial is a computationally intractable problem, one can appeal to GP duality \cite{DufPZ1967} to show that certifying nonnegativity of a signomial with all but one coefficient being positive can be accomplished in a computationally tractable manner.  These observations suggest a natural sufficient condition for certifying nonnegativity of general signomials via certificates that can be computed efficiently.  Specifically, if a signomial $f(\bx)$ can be decomposed as $f(\bx) = \sum_{i=1}^k f_i(\bx)$, where each $f_i(\bx)$ is a globally nonnegative signomial with at most one negative coefficient, then $f(\bx)$ is clearly nonnegative.  We refer to a decomposition of this form as a sum-of-AM/GM-exponentials (SAGE) decomposition, and to the associated functions $f(\bx)$ that are decomposable in this manner as SAGE functions.  The reason for this terminology is that certifying global nonnegativity of each of the functions $f_i(\bx)$ is accomplished by verifying an appropriate arithmetic-geometric-mean (AM/GM) inequality as described in Section~\ref{subsec:age}.

The key insight underlying our approach in Section~\ref{sec:sage} is that computing a SAGE decomposition of $f(\bx)$ can be cast as a feasibility problem involving a system of linear inequalities as well as inequalities specifying upper bounds on \emph{relative entropy} functions.  Recall that the relative entropy function is defined as follows for $\bnu, \blambda$ in the nonnegative orthant $\R^\ell_+$:
\begin{equation}
D(\bnu, \blambda) = \sum_{j=1}^\ell \bnu_j \log\left(\frac{\bnu_j}{\blambda_j}\right). \label{eq:relentdef}
\end{equation}
This function is jointly convex in both arguments \cite[p.90]{BoyV2004}, and the associated feasibility problem of finding a SAGE decomposition can be solved efficiently via convex optimization.  As discussed in Section~\ref{subsec:unconstrained}, this method can be employed to obtain lower bounds on the original signomial $f(\bx)$ by solving a tractable convex program involving linear as well as relative entropy functions.

In Section~\ref{sec:hierarchy} we also describe a principled framework to obtain a family of increasingly tighter lower bounds for general (constrained and unconstrained) SPs.  This family of bounds is obtained by solving hierarchies of successively larger convex programs based on relative entropy optimization; these hierarchies are derived by considering a sequence of tighter nonnegativity certificates for a signomial over a set defined by signomial constraints.  A prominent example of hierarchies of tractable convex programs being employed for intractable problems is in the setting of polynomial optimization problems, for which SDP relaxations have been developed based on sum-of-squares techniques \cite{Las2001,Par2000,Par2003}.  However, those methods are not directly relevant to SPs for several reasons, and we highlight these distinctions in Section~\ref{subsec:polyopt}.

The hierarchy of convex relaxations that we describe in Section~\ref{sec:hierarchy} has several notable features.  First, GPs are solved exactly by the first level in this hierarchy; thus, our hierarchy has the desirable property that ``easy problem instances remain easy.''  Second, the family of lower bounds is invariant under a natural transformation of the problem data.  Specifically, the optimal value of an SP remains unchanged under the application of a non-singular linear transformation simultaneously to all the parameters $\balpha^{(j)}$ that appear in the exponents of the signomials in an SP (both in the objective and in the constraints).  The hierarchy of relative entropy relaxations described in Section~\ref{sec:hierarchy} leads to bounds that are invariant under such transformations.  Third, it is desirable that any procedure for obtaining lower bounds of the optimal value of an SP be robust to small perturbations of the exponents $\balpha^{(j)}$ in an SP.  As discussed in Section~\ref{subsec:polyopt}, this is also a feature of our proposed approach.  Fourth, for broad families of SPs our approach leads to a convergent sequence of lower bounds, i.e., our hierarchy approximates the optimal value of the SP arbitrarily well from below (see Theorems~\ref{theorem:unc} and \ref{theorem:con}).  Such a property of a hierarchy is usually referred to as \emph{completeness}.  At various stages in the paper we discuss numerical experiments that illustrate the effectiveness of our methods.

\paragraph{Related work} Several researchers have developed strategies for optimizing SPs based on variants of branch-and-bound methods \cite{MarF1997} as well as heuristic techniques based on successive approximations that can be solved via linear or geometric programming \cite{BoyKVH2007,Chi2005,She2005,She1998}.  The framework presented in this paper is qualitatively different as it is based on solving convex optimization problems involving linear and relative entropy functions to obtain guaranteed bounds on the optimal value of SPs.  Our approach is founded on the insight that the joint convexity of the relative entropy function leads to an effective convex parametrization of certain families of globally nonnegative signomials.

\subsection{Paper Outline} \label{subsec:outline}
Section~\ref{sec:sage} describes the SAGE decomposition for certifying nonnegativity of a signomial based on relative entropy optimization and its connection to the AM/GM inequality.  This approach underlies the subsequent development in Section~\ref{sec:hierarchy} in which a hierarchy of relative entropy relaxations is proposed for general SPs.   In Section~\ref{sec:completeness}, we provide theoretical support for our hierarchies of relative entropy relaxations by proving that they provide a sequence of lower bounds that converges to the global minimum for a broad class of SPs.  Throughout the paper, we describe numerical experiments in which relative entropy optimization techniques are employed to obtain bounds on some stylized SPs.  These results were obtained using a basic interior-point solver written in MATLAB by following the discussions in \cite{BoyV2004}.  We conclude with a discussion of potential future directions in Section~\ref{sec:discussion}.  We prove Theorems~\ref{theorem:unc} and \ref{theorem:con} in the Appendix.

\paragraph{Notation} In $\ell$-dimensional space we denote the nonnegative orthant by $\R^\ell_+$, the positive orthant by $\R^\ell_{++}$, rational vectors by $\Q^\ell$, and nonnegative rational vectors by $\Q^\ell_+$.  We denote the nonnegative integers by $\mathbb{Z}_+$ and the positive integers by $\mathbb{Z}_{++}$.  Given a vector $\bw \in \R^\ell$, we denote the vector formed by removing the $i$'th coordinate from $\bw$ by $\bw_{\diff i} \in \R^{\ell-1}$.  By convention, we assume that $0 \log{\frac{0}{\lambda}} = 0$ for any $\lambda \in \R_+$, and that $\nu \log{\frac{\nu}{0}}$ equals $0$ if $\nu = 0$ and $\infty$ if $\nu > 0$.  Given a collection of vectors $\{\balpha^{(j)}\}_{j=1}^\ell \subset \R^n$, we define the set $E_p(\{\balpha^{(j)}\}_{j=1}^\ell) \subset \R^n$ for $p \in \mathbb{Z}_+$ as follows:
\begin{equation*}
E_p(\{\balpha^{(j)}\}_{j=1}^\ell) = \left\{ \sum_{j=1}^\ell \lambda_j \balpha^{(j)} ~|~ \sum_{j=1}^\ell \lambda_j \leq p, ~ \lambda_j \in \mathbb{Z}_+ \right\}.
\end{equation*}

\section{SAGE Decomposition and Relative Entropy Optimization} \label{sec:sage}

In this section we describe a sufficient condition to certify global nonnegativity of a signomial $f$ by decomposing it into a sum of terms $f = \sum_{i} f_i$, where each $f_i$ has an efficiently computed nonnegativity certificate based on relative entropy optimization.  This method serves as the basic building block in the next section on hierarchies of relative entropy relaxations for general SPs.

\subsection{Signomials with One Negative Term and the AM/GM Inequality} \label{subsec:age}

The main observation underlying our methods is that certifying the nonnegativity of a signomial with at most one negative coefficient can be accomplished efficiently.

\begin{definition} \label{defn:age}
A globally nonnegative signomial with at most one negative coefficient is called an \emph{AM/GM-exponential}.
\end{definition}

The reason for this terminology is that certifying the nonnegativity of an AM/GM-exponential $g(\bx) = \beta \exp\{\balpha' \bx\} + \sum_{j=1}^\ell \bc_j \exp\{[\bQ' \bx]_j \}$ -- here, $\bc \in \R^\ell_+$, $\beta \in \R$, $\bQ \in \R^{n \times \ell}$, and $\balpha \in \R^n$ -- is essentially \emph{equivalent} to verifying an AM/GM inequality.  To see this connection, suppose there exists a $\bnu \in \R^\ell$ satisfying the following conditions:
\begin{equation}
D(\bnu,e\bc) - \beta \leq 0, ~ \bnu \in \R^\ell_+, ~ \bQ \bnu = (\ones' \bnu) \balpha. \label{eq:relent}
\end{equation}
A $\bnu \in \R^\ell_+ \backslash \bzero$ satisfying these conditions explicitly certifies the nonnegativity of $g(\bx)$ via the following two sequences of inequalities.\footnote{The conditions \eqref{eq:relent} being satisfied with $\bnu = \bzero$ corresponds to the situation that $\beta \geq 0$.}  First, we have that:
\begin{eqnarray*}
\sum_{j=1}^\ell \bc_j \exp\{[\bQ' \bx]_j \} &\stackrel{(i)}\geq& \prod_{j=1}^\ell \left( \frac{\bc_j \exp\{[\bQ' \bx]_j \}}{\bnu_j / (\ones' \bnu)} \right)^{\tfrac{\bnu_j}{\ones' \bnu}} \stackrel{(ii)}= \prod_{j=1}^\ell \left( \frac{\bc_j}{\bnu_j / (\ones' \bnu)} \right)^{\tfrac{\bnu_j}{\ones' \bnu}} \exp\{\balpha' \bx\}.
\end{eqnarray*}
Here inequality $(i)$ is a consequence of the AM/GM inequality applied with weights $\tfrac{\bnu}{\ones' \bnu}$, and equation $(ii)$ follows from \eqref{eq:relent}.  Next, we observe that:
\begin{eqnarray*}
\prod_{j=1}^\ell \left( \frac{\bc_j}{\bnu_j / (\ones' \bnu)} \right)^{\tfrac{\bnu_j}{\ones' \bnu}} &\stackrel{(iii)}=& \exp\left\{ -D(\tfrac{\bnu}{\ones' \bnu}, \bc) \right\} \stackrel{(iv)}\geq -\xi [D(\tfrac{\bnu}{\ones' \bnu},\bc) + \log(\xi) - 1 ], ~ \forall \xi \in \R_+ \\ &\stackrel{(v)}=& -\xi D(\tfrac{\bnu}{\ones' \bnu}, \tfrac{e}{\xi}\bc), ~ \forall \xi \in \R_+ \stackrel{(vi)}= -D(\tfrac{\xi \bnu}{\ones' \bnu}, e\bc), ~ \forall \xi \in \R_+ \\ &\stackrel{(vii)}\geq& -D(\bnu, e\bc) \stackrel{(viii)}\geq -\beta \exp\{\balpha' \bx \}.
\end{eqnarray*}
Here equation $(iii)$ follows from the definition of the relative entropy function \eqref{eq:relentdef}, inequality $(iv)$ follows from the observation that the exponential function is the convex conjugate of the negative entropy function --- i.e., $\exp\{ -\rho\} = \sup_{\xi \in \R_+} -\xi [\rho + \log(\xi) - 1 ]$ --- equations $(v), (vi)$ follow from the properties of the relative entropy function \eqref{eq:relentdef} (in particular, by noting that this function is positively homogenous), inequality $(vii)$ follows by setting $\xi = \ones' \bnu$, and finally inequality $(viii)$ follows from \eqref{eq:relent}.  Hence the existence of $\bnu \in \R^\ell$ satisfying \eqref{eq:relent} certifies the nonnegativity of the signomial $g(\bx)$.  The preceding discussion parallels some of the early expositions on GP duality based on the AM/GM inequality \cite{DufPZ1967}, although our formulation \eqref{eq:relent} is parametrized differently compared to the following formulation in the GP literature \cite{Chi2005,DufPZ1967}:
\begin{equation}
D(\bnu,\bc) + \log(- \beta) \leq 0, ~ \bnu \in \R^\ell_+, \ones' \bnu = 1, ~ \bQ \bnu = \balpha. \label{eq:relentgp}
\end{equation}
In comparison to \eqref{eq:relent}, the parametrization \eqref{eq:relentgp} is not jointly convex in $(\bnu,\bc,\beta)$.  As discussed after Lemma~\ref{lemma:agerelent}, the joint convexity of the inequality $D(\bnu,e\bc) - \beta \leq 0$ in \eqref{eq:relent} with respect to $(\bnu,\bc,\beta)$ plays a crucial role in our subsequent development.

\begin{example} \label{example:basicage} Consider the function $g(\bx) = \exp\{\bx_1\} + \exp\{\bx_2\} - \exp\{(\tfrac{1}{2}+\epsilon) \bx_1 +(\tfrac{1}{2}-\epsilon) \bx_2 \}$ for a fixed $\epsilon \in [-\tfrac{1}{2},\tfrac{1}{2}]$.  It is easily seen that $\bnu = (\tfrac{1}{2}+\epsilon, \tfrac{1}{2}-\epsilon)'$ satisfies the conditions \eqref{eq:relent}:
\begin{eqnarray*}
D((\tfrac{1}{2}+\epsilon, \tfrac{1}{2}-\epsilon)', e \bones) - (-1) &=& D((\tfrac{1}{2}+\epsilon, \tfrac{1}{2}-\epsilon)', \bones) \leq 0 \\ I_{2 \times 2} ~ \bnu &=& (\tfrac{1}{2}+\epsilon, \tfrac{1}{2}-\epsilon)' 
\end{eqnarray*}
Here $I_{2 \times 2}$ is the $2 \times 2$ identity matrix and the first inequality follows from the fact that $D((\tfrac{1}{2}+\epsilon, \tfrac{1}{2}-\epsilon)', \bones)$ is the negative entropy of a probability vector, which is always nonpositive \cite[p.15]{CovT2006}.  Consequently, $\bnu$ certifies the nonnegativity of $g(\bx)$.
\end{example}

Next, we show the converse result: if $g$ is an AM/GM-exponential, then there exists a certificate $\bnu$ satisfying \eqref{eq:relent}.  This follows as a consequence of strong duality applied to a suitable convex program.

\begin{lemma} \label{lemma:agerelent}
Let $g(\bx) = \beta \exp\{\balpha' \bx\} + \sum_{j=1}^\ell \bc_j \exp\{[\bQ' \bx]_j \}$, where $\bc \in \R^\ell_+$, $\beta \in \R$, $\bQ \in \R^{n \times \ell}$, and $\balpha \in \R^n$.  If $g(\bx) \geq 0$ for all $\bx \in \R^n$ then there exists $\bnu  \in \R^\ell$ satisfying the conditions \eqref{eq:relent}.
\end{lemma}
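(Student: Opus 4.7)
The plan is to recognize the required $\bnu$ as an optimal solution of the Fenchel dual of a natural geometric program built from $g$, so that strong duality does the work. First dispose of the trivial case $\beta \geq 0$ by taking $\bnu = \bzero$, which satisfies $D(\bnu, e\bc) = 0 \leq \beta$ and $\bQ\bnu = \bzero = (\ones'\bnu)\balpha$. Henceforth assume $\beta < 0$. Dividing the inequality $g(\bx) \geq 0$ through by $\exp\{\balpha'\bx\}$ shows that the hypothesis is equivalent to
\begin{equation*}
h(\bx) := \sum_{j=1}^\ell \bc_j \exp\{[(\bQ - \balpha \ones')' \bx]_j\} \geq -\beta \quad \forall\, \bx \in \R^n,
\end{equation*}
i.e.\ $\inf_\bx h(\bx) \geq -\beta > 0$. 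Dropping any indices $j$ with $\bc_j = 0$ (which must force $\bnu_j = 0$ at any finite-value dual point anyway), I may further assume $\bc \in \R^\ell_{++}$.

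Next I would compute the Lagrange dual of $\inf_\bx h(\bx)$ in the standard GP way: introduce slack variables $y_j = [(\bQ-\balpha\ones')'\bx]_j$, note that the convex conjugate of $y \mapsto \bc_j \exp\{y\}$ is $\nu \mapsto \nu \log(\nu/\bc_j) - \nu$ on $\R_+$ (with value $+\infty$ off $\R_+$), and read off the dual
\begin{equation*}
\sup\{\ones'\bnu - D(\bnu, \bc) ~:~ \bnu \in \R^\ell_+,~ (\bQ - \balpha\ones')\bnu = \bzero\}.
\end{equation*}
Using the identity $D(\bnu, e\bc) = D(\bnu, \bc) - \ones'\bnu$ and rewriting the linear constraint as $\bQ\bnu = (\ones'\bnu)\balpha$, this is precisely the supremum of $-D(\bnu, e\bc)$ over the feasible set in \eqref{eq:relent}. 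Since the primal inner function $\by \mapsto \sum_j \bc_j \exp\{y_j\}$ has effective domain all of $\R^\ell$, the Fenchel--Rockafellar constraint qualification is automatic, so strong duality holds and the dual supremum equals $\inf_\bx h(\bx) \geq -\beta$. Any maximizer $\bnu^\star$ then satisfies $-D(\bnu^\star, e\bc) \geq -\beta$, which rearranges to the desired $D(\bnu^\star, e\bc) - \beta \leq 0$.

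Attainment of the dual supremum is the one nontrivial step, and I would handle it via coercivity. The pointwise inequality $\nu \log(\nu/(ec)) \geq \nu - ec$ for $\nu, c > 0$ gives $D(\bnu, e\bc) \geq \ones'\bnu - e\, \ones'\bc$, so every superlevel set $\{\bnu \in \R^\ell_+ : -D(\bnu, e\bc) \geq \tau\}$ lies in the bounded simplex $\{\bnu \geq \bzero : \ones'\bnu \leq e\, \ones'\bc - \tau\}$. Intersecting with the closed linear feasibility cone yields a compact set on which the upper semicontinuous concave objective attains its supremum, producing the required $\bnu^\star$ and finishing the argument.
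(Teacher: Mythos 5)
Your proof is correct and follows essentially the same route as the paper: divide by $\exp\{\balpha'\bx\}$, dualize the resulting geometric program to obtain $\sup\{-D(\bnu,e\bc) : \bnu\in\R^\ell_+,\ \bQ\bnu=(\ones'\bnu)\balpha\}$, and invoke strong duality with dual attainment. The only difference is bookkeeping: the paper asserts attainment via Slater's condition on an epigraph reformulation, whereas you verify it explicitly through the coercivity bound $D(\bnu,e\bc)\geq\ones'\bnu-e\,\ones'\bc$ (and handle $\beta\geq 0$ and zero coefficients separately), which is a sound, slightly more self-contained version of the same argument.
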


\begin{proof}
The signomial $g(\bx)$ being globally nonnegative is equivalent to the signomial $g(\bx) \exp\{-\balpha' \bx\}$ being globally nonnegative, which in turn is equivalent to $\sum_{j=1}^\ell \bc_j \exp\{[\bQ' \bx]_j - \balpha' \bx\} \geq -\beta$ for all $\bx \in \R^n$.  To certify a lower bound on the convex function $\sum_{j=1}^\ell \bc_j \exp\{[\bQ' \bx]_j - \balpha' \bx\}$, consider the following convex program:
\begin{eqnarray*}
\inf_{\bx \in \R^n, ~ \mathbf{t} \in \R^\ell} \bc' \mathbf{t} ~~~ \mathrm{s.t.} ~~~ \exp\left\{[\bQ' \bx]_j - \balpha' \bx\right\} \leq \mathbf{t}_j, ~ j = 1,\dots,\ell.
\end{eqnarray*}
This primal problem satisfies Slater's conditions \cite{Roc1970}, and its Lagrangian dual is:
\begin{eqnarray*}
\sup_{\bnu \in \R^\ell} -D(\bnu,e\bc) ~~~ \mathrm{s.t.} ~~~ \bnu \in \R^\ell_+, ~ \bQ \bnu = (\ones' \bnu) \balpha.
\end{eqnarray*}
The result follows as strong duality holds and the dual optimum is attained.
\end{proof}

In describing these results, we have implicitly viewed $\bc \in \R^\ell_+, \beta \in \R$ as \emph{fixed} coefficients.  However, the conditions \eqref{eq:relent} are \emph{convex} with respect to $\bc, \beta$ by virtue of the joint convexity of the relative entropy function.  Consequently, the \emph{set} of AM/GM-exponentials with respect to a collection of exponents can be efficiently parametrized as a convex set given by linear and relative entropy inequalities:
\begin{eqnarray}
&& \mathcal{C}_\age\big(\balpha^{(1)},\dots,\balpha^{(\ell)}; \balpha^{(0)} \big) \nonumber \\&& = \Big\{(\bc,\beta) \in \R^{\ell}_+ \times \R ~|~ g(\bx) = \beta \exp\{{\balpha^{(0)}}' \bx\} + \sum_{j=1}^\ell \bc_j \exp\{{\balpha^{(j)}}' \bx \}, \nonumber \\ && \hspace{1.33in} g(\bx) ~\text{an AM/GM-exponential} \Big\} \nonumber \nonumber \\ && = \Big\{(\bc,\beta) \in \R^{\ell}_+ \times \R ~|~ \exists \bnu \in \R^\ell_+ ~\mathrm{s.t.}~  D(\bnu,e\bc) \leq \beta, \sum_{j=1}^\ell \balpha^{(j)} \bnu_j = (\ones' \bnu) \balpha^{(0)} \Big\}. \label{eq:cage}
\end{eqnarray}
Based on this representation, one can see that the set $\mathcal{C}_\age\big(\balpha^{(1)},\dots,\balpha^{(\ell)}; \balpha^{(0)} \big)$ is a convex cone.  This parametrization plays a crucial role in the next subsection and beyond as we describe computationally efficient methods based on relative entropy optimization (\emph{jointly} over both arguments) to obtain bounds for SPs.

\subsection{SAGE Decomposition of a General Signomial} \label{subsec:sage}
If a signomial consists of more than one negative term, a natural sufficient condition for certifying nonnegativity is to express the signomial as a sum of AM/GM-exponentials:
\begin{definition} \label{defn:sage}
Given a finite collection of vectors $M \subset \R^n$, the set of \emph{sum-of-AM/GM-exponentials} with respect to $M$ is defined as:
\begin{equation*}
\sage(M) = \Big\{f ~|~ f = \sum_{i=1}^m f_i, ~ f_i ~\text{an AM/GM-exponential with exponents in} ~ M \Big\}.
\end{equation*}
A signomial $f \in \sage(M)$ is called a \emph{$\sage$ function} with respect to $M$, and a decomposition of $f$ as a sum of AM/GM-exponentials is called a \emph{SAGE decomposition}.
\end{definition}

As each term in a SAGE decomposition is globally nonnegative, it is easily seen that SAGE functions are globally nonnegative signomials.

\begin{figure}
\centering
\includegraphics[scale=0.2]{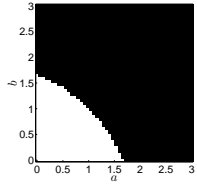} \caption{The locations in white denote those values of $(a,b) \in \R^2_+$ for which the signomial $f_{a,b}(\bx_1, \bx_2) = \exp\{\bx_1\} + \exp\{\bx_2\} - a \exp\{\delta \bx_1 + (1-\delta) \bx_2\} - b \exp\{(1-\delta) \bx_1 + \delta\bx_2\}$ with $\delta = \pi / 4$ is a SAGE function.  The intercept is around $1.682$.}
\label{fig:sageplot}
\end{figure}

\begin{example} \label{example:basicsage}
Consider a signomial $f(\bx) = \sum_i p_i(\bx) g_i(\bx)$, where each $p_i(\bx)$ is a posynomial and each $g_i(\bx)$ is an AM/GM-exponential.  If each $p_i(\bx),g_i(\bx)$ is defined with respect to exponents $\{\balpha^{(j)}\}_{j=1}^\ell$, one can check that $f$ is a SAGE function with respect to exponents $E_2(\{\balpha^{(j)}\}_{j=1}^\ell)$.
\end{example}

\begin{example} \label{example:basicsageparam}
As another example, consider the parametrized family of signomials $f_{a,b}(\bx_1, \bx_2) = \exp\{\bx_1\} + \exp\{\bx_2\} - a \exp\{\delta \bx_1 + (1-\delta) \bx_2\} - b \exp\{(1-\delta) \bx_1 + \delta\bx_2\}$, where we set $\delta = \pi / 4$.  Figure~\ref{fig:sageplot} shows a plot of the values of $(a,b) \in \R^2_+$ for which $f_{a,b}(\bx_1,\bx_2)$ is a SAGE function with respect to the exponents $\{(1,0)', (0,1)', (\delta,1-\delta)',(1-\delta,\delta)'\}$.
\end{example}

Although SAGE functions are globally nonnegative, not all nonnegative signomials are decomposable as SAGE functions.  The following simple example illustrates the point that SAGE decomposability is only a sufficient condition for nonnegativity:

\begin{example}
Consider $f(\bx_1,\bx_2,\bx_3) = \exp\{2\bx_1\} + \exp\{2\bx_2\} + \exp\{2\bx_3\} - 2 \exp\{\bx_1 + \bx_2\} - 2\exp\{\bx_1 + \bx_3\} + 2\exp\{\bx_2+\bx_3\}$.  This function is not a SAGE function with respect to the six exponents in the expression.  One can check that $f(\bx_1,\bx_2,\bx_3) = (\exp\{\bx_1\} - \exp\{\bx_2\} - \exp\{\bx_3\})^2$, which demonstrates its nonnegativity.  In Section~\ref{subsec:polyopt}, we contrast sum-of-squares methods from the polynomial optimization literature \cite{Las2001,Par2000,Par2003} with the methods developed in this paper, highlighting the benefits of convex relaxations based on SAGE decompositions for SPs.
\end{example}

For $M = \{\balpha^{(j)}\}_{j=1}^\ell$, we have that:
\begin{equation}
\begin{aligned}
\sage\left(\{\balpha^{(j)}\}_{j=1}^\ell \right) = \bigg\{f ~|~ & f(\bx) = \sum_{i=1}^\ell f_i(\bx), ~\mathrm{each}~ f_i(\bx) \geq 0 ~ \mathrm{for ~ all} ~\bx \in \R^n \\ & f_i(\bx) = \sum_{j=1}^\ell \tilde{\bc}_j^{(i)} \exp\big\{{\balpha^{(j)}}' \bx\big\} ~ \mathrm{with} ~ \tilde{\bc}_{\diff i}^{(i)} \in \R^{\ell-1}_+ \bigg\}. \label{eq:sagefinite}
\end{aligned}
\end{equation}
In order to obtain an efficient description of $\sage\left(\{\balpha^{(j)}\}_{j=1}^\ell \right)$ we consider the following set of coefficients (analogous to $\mathcal{C}_\age\big(\balpha^{(1)},\dots,\balpha^{(\ell)}; \balpha^{(0)} \big)$ in \eqref{eq:cage}):
\begin{equation}
\mathcal{C}_\sage\left(\balpha^{(1)}, \dots, \balpha^{(\ell)} \right) = \Big\{\bc \in \R^\ell ~|~ \sum_{j=1}^\ell \bc_j \exp\{{\balpha^{(j)}}' \bx \} \in \sage\Big(\{\balpha^{(j)}\}_{j=1}^\ell\Big) \Big\}. \label{eq:sagecoeff1}
\end{equation}
The next proposition summarizes the relevant properties of SAGE functions by giving an explicit representation of the set $\mathcal{C}_\sage\left(\balpha^{(1)}, \dots, \balpha^{(\ell)} \right)$.

\begin{proposition} \label{prop:sagecone}
Fix a collection of exponents $\{\balpha^{(j)}\}_{j=1}^\ell \subset \R^n$.  Then we have:
\begin{eqnarray}
&\mathcal{C}_\sage\big(&\balpha^{(1)}, \dots, \balpha^{(\ell)} \big) \nonumber \\ && = \Bigg\{\bc \in \R^\ell ~|~ \exists \bc^{(j)} \in \R^\ell, ~ j=1,\dots,\ell ~ \mathrm{s.t.} ~ \bc = \sum_{j=1}^{\ell} \bc^{(j)}, \nonumber \\ && \hspace{0.72in} {\bc^{(j)}_{\diff j} \choose \bc^{(j)}_j} \in \mathcal{C}_\age\Big(\balpha^{(1)},\dots,\balpha^{(j-1)},\balpha^{(j+1)},\dots,\balpha^{(\ell)}; \balpha^{(j)}\Big) \Bigg\} \nonumber \\ && = \Big\{\bc \in \R^\ell ~|~ \exists \bc^{(j)} \in \R^\ell, \bnu^{(j)} \in \R^\ell, ~ j=1,\dots,\ell ~ \mathrm{s.t.} ~\bc = \sum_{j=1}^\ell \bc^{(j)} \nonumber \\ && \hspace{0.72in} \sum_{i=1}^\ell \balpha^{(i)} \bnu^{(j)}_i = \bzero, ~ \bnu^{(j)}_j = -\ones' \bnu^{(j)}_{\diff j}, ~ j=1,\dots,\ell \nonumber \\ && \hspace{0.72in} D\left(\bnu^{(j)}_{\diff j}, e \bc^{(j)}_{\diff j}\right) - \bc^{(j)}_j \leq 0, ~ \bnu^{(j)}_{\diff j} \in \R^{\ell-1}, ~ j=1,\dots,\ell \Big\}.
\label{eq:sagecoeff2}
\end{eqnarray}
The set $\mathcal{C}_\sage\left(\balpha^{(1)}, \dots, \balpha^{(\ell)} \right)$ is a convex cone, and its dual is given by:
\begin{equation}
\begin{aligned}
\mathcal{C}^\star_\sage\Big(\balpha^{(1)}, \dots, \balpha^{(\ell)} \Big) = \Big\{\bv \in \R^\ell_+ ~|~ & \exists \btau^{(j)} \in \R^n, ~ j=1,\dots,\ell ~ \mathrm{s.t.} \\ & \bv_i \log\Big(\frac{\bv_i}{\bv_j}\Big) \leq \Big(\balpha^{(i)} - \balpha^{(j)}\Big)' \btau^{(i)}, ~ \forall i,j \Big\}.
\end{aligned}\label{eq:sagedualcone}
\end{equation}
\end{proposition}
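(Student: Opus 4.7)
The plan is to treat each of the four claims of the proposition separately. For the first equality I use a reorganization argument. The $\supseteq$ direction is immediate from the definitions: given $\bc = \sum_j \bc^{(j)}$ with each $(\bc^{(j)}_{\diff j}, \bc^{(j)}_j) \in \mathcal{C}_\age(\ldots;\balpha^{(j)})$, the signomial $\sum_i \bc^{(j)}_i \exp\{{\balpha^{(i)}}'\bx\}$ is an AM/GM-exponential for each $j$, and summing over $j$ yields a SAGE decomposition of $\sum_i \bc_i \exp\{{\balpha^{(i)}}'\bx\}$. For the $\subseteq$ direction, I start from an arbitrary SAGE decomposition $f = \sum_i f_i$, pad each $f_i$ with zero coefficients so that it uses the full exponent set $\{\balpha^{(j)}\}_{j=1}^\ell$, and group the $f_i$'s according to the index of their (at most one) negative coefficient; purely posynomial summands are assigned arbitrarily to one group. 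Summing within the $j$-th group then defines a suitable $\bc^{(j)}$ with $(\bc^{(j)}_{\diff j}, \bc^{(j)}_j) \in \mathcal{C}_\age(\ldots;\balpha^{(j)})$.

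The second equality follows by directly unfolding the representation \eqref{eq:cage} of $\mathcal{C}_\age$ inside the first equality: membership of $(\bc^{(j)}_{\diff j}, \bc^{(j)}_j)$ is certified by some $\bnu^{(j)}_{\diff j} \in \R^{\ell-1}_+$ satisfying the relative-entropy inequality together with the balance $\sum_{i \neq j}\balpha^{(i)}\bnu^{(j)}_i = (\ones'\bnu^{(j)}_{\diff j})\balpha^{(j)}$. The notational shorthand $\bnu^{(j)}_j := -\ones'\bnu^{(j)}_{\diff j}$ absorbs this balance into the cleaner condition $\sum_{i=1}^\ell \balpha^{(i)}\bnu^{(j)}_i = \bzero$, producing the stated form. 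Convexity of $\mathcal{C}_\sage$ is then immediate: each $\mathcal{C}_\age$ is a convex cone by the joint convexity of the relative-entropy function in its two arguments, and Minkowski sums of linear images of convex cones remain convex cones.

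The dual-cone formula is the main technical step and the principal obstacle. I decompose $\mathcal{C}_\sage = \sum_{j=1}^\ell K_j$, where $K_j \subset \R^\ell$ is the coordinate embedding of $\mathcal{C}_\age(\balpha^{(1)},\ldots,\widehat{\balpha^{(j)}},\ldots,\balpha^{(\ell)};\balpha^{(j)})$ with position $j$ playing the role of the distinguished ``$\beta$'' coordinate, so that $\mathcal{C}_\sage^\star = \bigcap_j K_j^\star$. To identify each $K_j^\star$, I observe that $\bv \in K_j^\star$ iff $\inf \bv'\bc \geq 0$ over the convex set parametrizing $K_j$; extreme-ray analysis (considering rays with $\bnu = \bzero$) forces $\bv \in \R^\ell_+$. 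For $\bv > 0$ I carry out the infimum analytically: fixing $\bc_j$ at equality in the relative-entropy inequality and then optimizing over each $\bc_i$ ($i \neq j$) via stationarity (at $\bc_i = \bv_j\bnu_i/\bv_i$) collapses the objective to the linear-in-$\bnu$ expression $\bv_j \sum_{i \neq j}\bnu_i \log(\bv_i/\bv_j)$. The requirement that this be nonnegative for every $\bnu_{\diff j} \in \R^{\ell-1}_+$ satisfying the balance $\sum_i \balpha^{(i)}\bnu^{(j)}_i = \bzero$ is a linear-conic Farkas system, yielding a multiplier $\hat\btau^{(j)} \in \R^n$ with $(\balpha^{(i)} - \balpha^{(j)})'\hat\btau^{(j)} \leq \log(\bv_i/\bv_j)$ for every $i \neq j$. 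A final swap of labels $i \leftrightarrow j$ combined with the rescaling $\btau^{(i)} := \bv_i \hat\btau^{(i)}$ (legitimate because $\bv_i > 0$) flips the inequality and converts it into the stated form $\bv_i \log(\bv_i/\bv_j) \leq (\balpha^{(i)} - \balpha^{(j)})'\btau^{(i)}$; boundary values of $\bv$ are handled by standard closure arguments using the conventions on the relative-entropy function set out in the notation section. The subtlety lies in tracking this swap-and-rescale manipulation precisely, since the natural output of the Farkas step differs from the proposition's presentation by a relabeling of indices and a multiplicative factor of $\bv_i$.
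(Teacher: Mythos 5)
Your proposal is correct and follows essentially the same route as the paper: the first two equalities come from regrouping the AM/GM-exponential summands by the location of their single negative coefficient (the content of \eqref{eq:sagefinite}) together with Lemma~\ref{lemma:agerelent} and \eqref{eq:cage}, and the dual is obtained exactly as the paper indicates, by writing $\mathcal{C}_\sage$ as a sum of $\mathcal{C}_\age$ cones and intersecting their duals. The only difference is that you carry out explicitly the ``straightforward calculation'' of each $\mathcal{C}^\star_\age$ that the paper omits (inner minimization over $\bc$, Farkas/LP duality over $\bnu_{\diff j}$, then the relabel-and-rescale by $\bv_i$), and that calculation checks out.
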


\begin{proof}
For a coefficient vector $\bc \in \R^\ell$ to belong to $\mathcal{C}_\sage\left(\balpha^{(1)}, \dots, \balpha^{(\ell)} \right)$, we have from \eqref{eq:sagecoeff1} that $f(\bx) = \sum_{j=1}^\ell \bc_j \exp\left\{{\balpha^{(j)}}' \bx \right\}$ must be a SAGE function with respect to the exponents $\{\balpha^{(j)}\}_{j=1}^\ell$.  The description of $\mathcal{C}_\sage\left(\balpha^{(1)}, \dots, \balpha^{(\ell)} \right)$ in \eqref{eq:sagecoeff2} then follows directly from Lemma~\ref{lemma:agerelent} and from \eqref{eq:sagefinite}.

It is also clear that $\mathcal{C}_\sage\left(\balpha^{(1)}, \dots, \balpha^{(\ell)} \right)$ is a convex cone, because it can be viewed as the projection of a convex cone from \eqref{eq:sagecoeff2}.  Specifically, the inequality $D\left(\bnu^{(j)}_{\diff j}, e \bc^{(j)}_{\diff j}\right) - \bc^{(j)}_j \leq 0$ defines a convex cone because the relative entropy function is positively homogenous and convex jointly in both arguments, and all the other constraints define convex cones in the variables $\bc, \bc^{(j)}, \bnu^{(j)}$.

Finally, the description of the dual cone $\mathcal{C}^\star_\sage\left(\balpha^{(1)}, \dots, \balpha^{(\ell)} \right)$ follows from a straightforward calculation based on the observation that $\mathcal{C}_\sage\left(\balpha^{(1)}, \dots, \balpha^{(\ell)} \right)$ is a sum of convex cones $\mathcal{C}_\age\Big(\balpha^{(1)},\dots,\balpha^{(j-1)},\balpha^{(j+1)},\dots,\balpha^{(\ell)}; \balpha^{(j)}\Big)$ corresponding to AM/GM exponentials; therefore, the dual $\mathcal{C}^\star_\sage\left(\balpha^{(1)}, \dots, \balpha^{(\ell)} \right)$ is the intersection of the duals $\mathcal{C}^\star_\age\Big(\balpha^{(1)},\dots,\balpha^{(j-1)},\balpha^{(j+1)},\dots,\allowbreak\balpha^{(\ell)}; \balpha^{(j)}\Big)$.
\end{proof}

Thus, the set of SAGE functions with respect to a set of exponents can be efficiently characterized based on this proposition.  In Section~\ref{sec:hierarchy}, we employ this result to develop computationally tractable methods for obtaining lower bounds on signomials.  The characterization of the cone $\mathcal{C}_\sage\left(\balpha^{(1)}, \dots, \balpha^{(\ell)} \right)$ in Proposition~\ref{prop:sagecone} also clarifies an appealing invariance property of SAGE functions:

\begin{corollary} \label{corl:invariance}
Fix a collection of exponents $\{\balpha^{(j)}\}_{j=1}^\ell \subset \R^n$.  For any non-singular matrix $M \in \R^{n \times n}$, we have that
\begin{equation*}
\mathcal{C}_\sage\left(\balpha^{(1)}, \dots, \balpha^{(\ell)} \right) = \mathcal{C}_\sage\left(M \balpha^{(1)}, \dots, M\balpha^{(\ell)} \right).
\end{equation*}
\end{corollary}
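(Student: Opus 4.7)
The proof will be short; it amounts to observing that the parametrization of $\mathcal{C}_\sage$ in Proposition~\ref{prop:sagecone} depends on the exponents $\balpha^{(j)}$ only through a single linear equation that is preserved under invertible linear transformations. I will give two arguments: a direct change-of-variables argument and an argument from the explicit representation \eqref{eq:sagecoeff2}.

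For the direct argument, I would observe that a signomial $f(\bx) = \sum_{j=1}^\ell \bc_j \exp\{{\balpha^{(j)}}'\bx\}$ is globally nonnegative on $\R^n$ if and only if, under the invertible substitution $\bx = M'\by$, the signomial $\tilde f(\by) = \sum_{j=1}^\ell \bc_j \exp\{(M\balpha^{(j)})'\by\}$ is globally nonnegative on $\R^n$, since $M'$ is a bijection of $\R^n$. Moreover, AM/GM-exponentials (signomials with at most one negative coefficient) are preserved by this substitution since it does not change coefficients. Consequently, any SAGE decomposition $f = \sum_i f_i$ transforms term-by-term into a SAGE decomposition $\tilde f = \sum_i \tilde f_i$ with respect to exponents $\{M\balpha^{(j)}\}_{j=1}^\ell$, and conversely. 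This yields the claimed equality.

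For the argument from \eqref{eq:sagecoeff2}, I would note that of all the constraints appearing in that representation, only $\sum_{i=1}^\ell \balpha^{(i)} \bnu^{(j)}_i = \bzero$ involves the exponents; the relative entropy inequality, the sign-summation constraint $\bnu^{(j)}_j = -\ones' \bnu^{(j)}_{\diff j}$, and the decomposition $\bc = \sum_j \bc^{(j)}$ involve only the auxiliary variables. Replacing $\balpha^{(i)}$ with $M\balpha^{(i)}$ transforms the equation into $M \sum_{i=1}^\ell \balpha^{(i)} \bnu^{(j)}_i = \bzero$, which, by the non-singularity of $M$, is equivalent to the original equation. Hence the same tuples $(\bc,\{\bc^{(j)}\},\{\bnu^{(j)}\})$ certify membership in both cones.

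There is no real obstacle here; the only thing to be a little careful about is making sure that the transformation $\bx = M' \by$ (rather than $\bx = M\by$) is what matches multiplication of the exponent vectors on the left by $M$, so that the bijection of $\R^n$ lines up with the action on exponents that defines $\mathcal{C}_\sage(M\balpha^{(1)},\dots,M\balpha^{(\ell)})$. Either of the two arguments above gives the result in a couple of lines.
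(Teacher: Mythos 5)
Your proposal is correct, and your second argument is essentially the paper's proof: the paper likewise observes that the exponents enter the representation \eqref{eq:sagecoeff2} only through the linear equation $\sum_{i=1}^\ell \balpha^{(i)} \bnu^{(j)}_i = \bzero$, which is satisfied by the same $\bnu^{(j)}$ if and only if it is satisfied with $M\balpha^{(i)}$ in place of $\balpha^{(i)}$ by non-singularity of $M$. Your first change-of-variables argument (with the correct pairing $\bx = M'\by$) is a valid alternative but adds nothing beyond the one-line observation.
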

\begin{proof}
This result follows by noting that the only appearance of the exponents $\{\balpha^{(j)}\}_{j=1}^\ell$ in the description of $\mathcal{C}_\sage\left(\balpha^{(1)}, \dots, \balpha^{(\ell)} \right)$ in \eqref{eq:sagecoeff2} is in the equation $\sum_{i=1}^\ell \balpha^{(i)} \bnu^{(j)}_i = \bzero$.  A $\bnu^{(j)} \in \R^\ell$ satisfies this constraint if and only if the same $\bnu^{(j)}$ also satisfies $\sum_{i=1}^\ell M \balpha^{(i)} \bnu^{(j)}_i = \bzero$.  This concludes the proof.
\end{proof}

Notice that the nonnegativity of a signomial is preserved under the application of a non-singular linear transformation simultaneously to all the exponents; more generally, the optimal value of an SP is unchanged under a non-singular transformation applied simultaneously to all the exponents in the objective and all the constraints.  The above invariance property of $\mathcal{C}_\sage\left(\balpha^{(1)}, \dots, \balpha^{(\ell)} \right)$ ensures that our lower bounds for SPs in Section~\ref{sec:hierarchy} based on SAGE decompositions are also similarly invariant under a simultaneous linear transformation applied to all the exponents in the SP.

\subsection{Remarks on Nonnegative Signomials} \label{subsec:nngsig}
Consider a signomial $f(\bx) \allowbreak = \allowbreak \sum_{j=1}^\ell \bc_j \exp\{{\balpha^{(j)}}' \bx\}$ with respect to exponents $\{\balpha^{(j)}\}_{j=1}^\ell$, and let the coefficients $\bc_j$ be nonzero for each $j=1,\dots\ell$.  If $f(\bx) \geq 0$ for all $\bx \in \R^n$, then we can make some observations about the coefficients $\bc_j$'s based on the structure of the set of exponents.  Specifically, suppose $\balpha^{(1)}$ is an extreme point of the convex hull of the exponents $\mathrm{conv}(\{\balpha^{(j)}\}_{j=1}^\ell)$; we will denote such an exponent as an \emph{extremal exponent}.  Then we can conclude from the separation theorem in convex analysis \cite{Roc1970} that there exists $\bu \in \R^n$ such that $\bu' \balpha^{(1)} = 1$ and $\bu' \balpha^{(j)} < 1$ for $j=2,\dots,\ell$.  Consequently, $\exp\{\gamma {\balpha^{(1)}}' \bu\}$ dominates $\exp\{\gamma {\balpha^{(j)}}' \bu\}$ for $j=2,\dots,\ell$ as $\gamma \rightarrow \infty$.  Based on the nonnegativity of $f(\bx)$, we conclude that $\bc_1 > 0$.  Thus, a coefficient $\bc_j$ must be positive if the corresponding exponent $\balpha^{(j)}$ is an extremal exponent.

These observations imply that the cone of coefficients of AM/GM-exponentials $\mathcal{C}_\age(\balpha^{(1)},\dots,\allowbreak\balpha^{(\ell)};\balpha^{(0)})= \R^{\ell+1}_{+}$ if $\balpha^{(0)} \allowbreak \notin \allowbreak \mathrm{conv}(\{\balpha^{(j)}\}_{j=1}^\ell$; this follows from the point that AM/GM-exponentials \emph{exactly} correspond to nonnegative signomials with at most one negative coefficient.  Consequently, one can exclude such posynomials in the description of SAGE functions in \eqref{eq:sagefinite}, which also leads to simpler representations in Proposition~\ref{prop:sagecone}.  These modifications may provide computational benefits if one wishes to certify nonnegativity of a signomial with the property that many of the exponents are extremal exponents.


\section{Hierarchies of Relative Entropy Relaxations for SPs} \label{sec:hierarchy}

In this section we describe hierarchies of convex relaxations to obtain lower bounds for SPs.  Our development parallels the literature on convex relaxations for polynomial optimization \cite{Las2001,Par2000,Par2003}, but with an important distinction -- our approach for SPs is based on relative entropy optimization derived via SAGE decompositions, while relaxations for polynomial optimization are based on SDPs derived via sum-of-squares decompositions.  Relative entropy relaxations are better suited than SDP relaxations for SPs, and we comment on these points in greater detail in Section~\ref{subsec:polyopt}.  We also give theoretical support for our hierarchies in Section~\ref{subsec:completeness} by showing that they provide convergent sequences of lower bounds to the optimal value for broad classes of SPs.  Finally, we present a dual perspective of our methods, which leads to a technique for recognizing when our lower bounds are tight and for extracting optimal solutions.

\subsection{A Hierarchy for Unconstrained SPs} \label{subsec:unconstrained}

Globally minimizing a signomial $f(\bx)$ is equivalent to maximizing a lower bound of $f(\bx)$:
\begin{equation}
f_\star ~~=~~ \inf_{\bx \in \R^n} ~ f(\bx) ~~=~~ \sup_{\gamma \in \R} ~ \gamma ~\mathrm{s.t.} ~ f(\bx) - \gamma \geq 0 ~ \forall \bx \in \R^n. \label{eq:generalspunc}
\end{equation}
Replacing the global nonnegativity condition on $f(\bx) - \gamma$ by a sufficient condition for nonnegativity gives lower bounds on the optimal value $f_{\star}$.  SAGE functions provide a natural sufficient condition for global nonnegativity leading to a computationally tractable approach for obtaining lower bounds, and we describe this relaxation next.

To state things concretely, fix a set of exponentials $\{\balpha^{(j)}\}_{j=1}^\ell \subset \R^n$ with $\balpha^{(1)} = \bzero$, and consider a signomial $f(\bx) = \sum_{j=1}^\ell \bc_j \exp\{{\balpha^{(j)}}' \bx\}$ defined with respect to these exponents. (If $f$ contains no constant term, then set the corresponding coefficient $\bc_1 = 0$.)  In order to produce a lower bound on the global minimum $f_\star$ of $f$, we consider the following convex relaxation based on SAGE decompositions:
\begin{equation}
f_{\sage} = \sup_{\gamma \in \R} ~ \gamma ~~~ \mathrm{s.t.} ~ f(\bx) - \gamma \in \sage\left(\{\balpha^{(j)}\}_{j=1}^\ell\right). \label{eq:sagegenerallowerbound}
\end{equation}
Based on the preceding discussion, it is clear that $f_{\sage} \leq f_\star$.  Furthermore, computing $f_{\sage}$ can be reformulated as follows via Proposition~\ref{prop:sagecone}:
\begin{equation}
f_{\sage} = \sup_{\gamma \in \R} ~ \gamma ~~~ \mathrm{s.t.} ~ (\bc_1 -\gamma, \bc_2, \dots, \bc_\ell) \in \mathcal{C}_\sage(\balpha^{(1)}, \dots, \balpha^{(\ell)}), \label{eq:sagelowerbound}
\end{equation}
where $\mathcal{C}_\sage(\balpha^{(1)},\dots,\balpha^{(\ell)})$ is specified in \eqref{eq:sagecoeff2}.  Thus, the bound $f_\sage$ is the optimal value of a tractable convex program with linear and joint relative entropy constraints.

If $f(\bx)$ is a posynomial then $f_\sage = f_\star$; this follows from the fact that $f(\bx) - \gamma$ is a SAGE function if and only if $f(\bx) - \gamma \geq 0$ for all $\bx \in \R^n$ because $f(\bx) - \gamma$ has all but one coefficient being positive.  Consequently, the SAGE relaxation is exact for unconstrained GPs, which is reassuring as tractable problem instances (GPs are convex optimization problems that be solved efficiently) can be computed effectively in our framework.  More generally, we illustrate the utility of SAGE relaxations for minimizing signomials with multiple negative coefficients in the following examples.

\begin{example} \label{example:unc}
We consider signomials in three variables with seven terms of the form $f(\bx) = \sum_{j=1}^7 \bc_j \exp\{{\balpha^{(j)}}' \bx\}$ with $\bx \in \R^3$ and the following parameters fixed: $\bc_1=0$, $\bc_2=\bc_3=\bc_4 = 10$, $\balpha^{(1)} = (0,0,0)'$, $\balpha^{(2)} = (10.2,0,0)'$, $\balpha^{(3)} = (0,9.8,0)'$, and $\balpha^{(4)} = (0,0,8.2)'$.  The exponents $\balpha^{(5)},\balpha^{(6)},\balpha^{(7)} \in \R^3$ are chosen to be random vectors with entries distributed uniformly in $[0,3]$, and the coefficients $\bc_5,\bc_6,\bc_7$ are chosen to be random Gaussians with mean $0$ and standard deviation $10$.  This construction --- relatively large exponents $\balpha^{(2)},\balpha^{(3)},\balpha^{(4)}$ in comparison to $\balpha^{(5)},\balpha^{(6)},\balpha^{(7)}$, and the corresponding positive coefficients $\bc_2 = \bc_3 = \bc_4 = 10$ --- is an attempt to obtain signomials that are bounded below.  An example of a signomial generated in this manner is:
\small
\begin{equation}
\begin{aligned}
f(\bx) =& 10 \exp\{10.2 \bx_1\} + 10 \exp\{9.8 \bx_2\} + 10 \exp\{8.2 \bx_3\} \\ & - 14.6794 \exp\{1.5089\bx_1 +  1.0981 \bx_2 + 1.3419 \bx_3\} \\ & - 7.8601 \exp\{1.0857\bx_1 +  1.9069 \bx_2 + 1.6192 \bx_3\} \\ & + 8.7838 \exp\{1.0459 \bx_1 + 0.0492 \bx_2 + 1.6245 \bx_3\}.
\end{aligned}\label{eq:sig1}
\end{equation}
\normalsize
The SAGE relaxation \eqref{eq:sagelowerbound} applied to $f(\bx)$ gives the lower bound $f_\sage \approx -0.9747$.  By applying a technique presented in Section~\ref{subsec:dual}, we obtain the point $\bx^\star = (-0.3020, \allowbreak -0.2586, -0.4010)'$ with $f(\bx^\star) \approx -0.9747$.  Consequently, the lower bound $f_\sage$ is tight in this case.  As another instance of the construction described here, consider the randomly generated signomial:
\small
\begin{equation}
\begin{aligned}
f(\bx) =& 10 \exp\{10.2 \bx_1\} + 10 \exp\{9.8 \bx_2\} + 10 \exp\{8.2 \bx_3\} \\ & + 7.5907 \exp\{1.9864 \bx_1 + 0.2010 \bx_2 + 1.0855 \bx_3\} \\ & - 10.9888 \exp\{2.8242 \bx_1 +  1.9355 \bx_2 + 2.0503 \bx_3\} \\ &  -13.9164 \exp\{0.1828 \bx_1 + 2.7772 \bx_2 + 1.9001 \bx_3\}.
\end{aligned} \label{eq:sig2}
\end{equation}
\normalsize
In this case the SAGE relaxation \eqref{eq:sagelowerbound} is not tight: the SAGE lower bound is $-1.426$, and the technique from Section~\ref{subsec:dual} gives the upper bound $-0.739$.  More generally, we generated $80$ random signomials according to the above description, and the SAGE relaxation was tight in $63\%$ of the cases, while there was a gap in the remaining cases.
\end{example}

These examples demonstrate that there are several cases in which the lower bound $f_\sage$ equals the global minimum $f_\star$.  There are also situations in which the lower bound $f_\sage$ does not equal $f_\star$, which is not unexpected as global minimization of signomials is in general a computationally intractable problem.  Motivated by such cases, we describe next a methodology for obtaining a sequence of increasingly tighter lower bounds based on improved sufficient conditions for the global nonnegativity of the signomial $f(\bx)-\gamma$.  These improved lower bounds require solution of successively larger-sized relative entropy optimization problems.

Specifically, for a set of exponents $\{\balpha^{(j)}\}_{j=1}^\ell \subset \R^n$ with $\balpha^{(1)} = \bzero$ and a signomial $f(\bx) = \sum_{j=1}^\ell \bc_j \exp\{{\balpha^{(j)}}' \bx\}$ defined with respect to these exponents, we replace the SAGE condition in \eqref{eq:sagegenerallowerbound} by a stronger sufficient condition for the global nonnegativity of $f(\bx) - \gamma$, leading to the following lower bound for each $p \in \mathbb{Z}_+$:
\begin{equation}
f_\sage^{(p)} = \sup_{\gamma \in \R} ~ \gamma ~~~ \mathrm{s.t.} ~ \Big(\sum_{j=1}^\ell \exp\{{\balpha^{(j)}}' \bx\} \Big)^p [f(\bx) - \gamma] \in \sage(E_{p+1}(\{\balpha^{(j)}\}_{j=1}^\ell)). \label{eq:sagelowerboundp}
\end{equation}
The multiplier term $\left(\sum_{j=1}^\ell \exp\{{\balpha^{(j)}}' \bx\} \right)^p$ is globally nonnegative for all $\bx \in \R^n$, and consequently the constraint above is a sufficient condition for the global nonnegativity of $f(\bx) - \gamma$.  This in turn implies that $f_\sage^{(p)} \leq f_\star$ for all $p \in \mathbb{Z}_+$.  One can check that the bound $f_\sage$ of \eqref{eq:sagelowerbound} is equal to $f_\sage^{(0)}$ in \eqref{eq:sagelowerboundp}.  Computing $f_\sage^{(p)}$ for each $p$ can be recast as a relative entropy optimization problem by appealing to Proposition~\ref{prop:sagecone} and to the fact that the coefficients of the signomial $\left(\sum_{j=1}^\ell \exp\{{\balpha^{(j)}}' \bx\} \right)^p [f(\bx) - \gamma]$ are linear functionals of $\gamma$.  For example for the case $p=1$, the signomial $\left(\sum_{\tilde{j}=1}^\ell \exp\{{\balpha^{(\tilde{j})}}' \bx\} \right) [f(\bx) - \gamma] \in \sage(E_{2}(\{\balpha^{(j)}\}_{j=1}^\ell))$ is equivalent to $\sum_{j,\tilde{j}=1}^\ell \bc_{j,\tilde{j}} \exp\{[\balpha^{(j)}+\balpha^{(\tilde{j})}]' \bx\} \in \sage(E_{2}(\{\balpha^{(j)}\}_{j=1}^\ell))$, with $\bc_{j,\tilde{j}} = \bc_1 - \gamma$ whenever $j=1$ and $\bc_{j,\tilde{j}} = \bc_j$ otherwise; the latter SAGE condition can be reformulated using linear and relative entropy constraints on $\bc_{j,\tilde{j}}$, and in turn on $\gamma$, via Proposition~\ref{prop:sagecone}.  The next result states that $f_\sage^{(p)}$ is a non-decreasing function of $p$; this improvement carries a computational cost as the size of the associated relative entropy program grows with $p$.

\begin{lemma}  \label{lemma:uncmonotone}
Fix a collection of exponents $\{\balpha^{(j)}\}_{j=1}^\ell \subset \R^n$ with $\balpha^{(1)} = \bzero$, and consider a signomial $f(\bx)$ defined with respect to these exponents.  We have that $f_\sage^{(p)} \leq f_\sage^{(p+1)}$ for all $p \in \mathbb{Z}_+$.
\end{lemma}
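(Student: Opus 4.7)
The plan is to show feasibility is preserved when passing from level $p$ to level $p+1$: any $\gamma$ admissible in the definition of $f_\sage^{(p)}$ is also admissible in that of $f_\sage^{(p+1)}$, and the inequality on the suprema follows immediately.

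The core fact I would establish is a closure property: multiplying a SAGE function by a single exponential $\exp\{{\balpha^{(j)}}' \bx\}$ yields a SAGE function, with exponents shifted by $\balpha^{(j)}$. Indeed, if $g(\bx) = \sum_i g_i(\bx)$ is a SAGE decomposition with each $g_i$ an AM/GM-exponential whose exponents lie in $E_{p+1}(\{\balpha^{(j)}\}_{j=1}^\ell)$, then $\exp\{{\balpha^{(j)}}' \bx\} g_i(\bx)$ is obtained from $g_i$ by adding $\balpha^{(j)}$ to each exponent. This operation preserves the signs of all coefficients (hence at most one remains negative), preserves global nonnegativity (multiplication by a strictly positive function), and sends exponents in $E_{p+1}$ into $E_{p+2}$, since the sum of the defining nonnegative integer weights grows by exactly one. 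Summing over $j \in \{1,\dots,\ell\}$, the product $\bigl(\sum_{j=1}^\ell \exp\{{\balpha^{(j)}}' \bx\}\bigr) g(\bx)$ is therefore SAGE with respect to $E_{p+2}(\{\balpha^{(j)}\}_{j=1}^\ell)$.

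Given this, the proof is essentially one line. Fix $\gamma$ feasible for \eqref{eq:sagelowerboundp} at level $p$, so that $g(\bx) := \bigl(\sum_{j=1}^\ell \exp\{{\balpha^{(j)}}' \bx\}\bigr)^p [f(\bx) - \gamma] \in \sage(E_{p+1}(\{\balpha^{(j)}\}_{j=1}^\ell))$. Multiplying by the posynomial $\sum_{j=1}^\ell \exp\{{\balpha^{(j)}}' \bx\}$ and applying the closure property above yields
\begin{equation*}
\Bigl(\sum_{j=1}^\ell \exp\{{\balpha^{(j)}}' \bx\}\Bigr)^{p+1} [f(\bx) - \gamma] \in \sage(E_{p+2}(\{\balpha^{(j)}\}_{j=1}^\ell)),
\end{equation*}
showing that $\gamma$ is feasible at level $p+1$. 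Taking the supremum over such $\gamma$ gives $f_\sage^{(p)} \leq f_\sage^{(p+1)}$.

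There is no real obstacle here; the only thing to check carefully is that the set $E_{p+1}$ is closed under adding a single $\balpha^{(j)}$ to land in $E_{p+2}$, which is immediate from its definition in terms of nonnegative integer combinations with coordinate-sum bounded by the subscript. I would include a short paragraph making explicit that multiplying an AM/GM-exponential by a single $\exp\{{\balpha^{(j)}}' \bx\}$ is again an AM/GM-exponential, so as to keep the argument self-contained.
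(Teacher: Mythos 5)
Your proof is correct and follows essentially the same route as the paper: write the level-$(p+1)$ signomial as $\sum_{j=1}^\ell \exp\{{\balpha^{(j)}}' \bx\}\, g_p(\bx)$, use that multiplying a SAGE function in $\sage(E_{p+1})$ by a single exponential lands in $\sage(E_{p+2})$, and invoke closure of SAGE under addition to conclude feasibility of each $\gamma$ is preserved. The only difference is that you spell out why an AM/GM-exponential times $\exp\{{\balpha^{(j)}}' \bx\}$ is again an AM/GM-exponential, which the paper leaves implicit; this is a fine, slightly more self-contained rendering of the same argument.
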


\begin{proof}
For a fixed value of $\gamma$, let $g_p(\bx) = \left(\sum_{j=1}^\ell \exp\{{\balpha^{(j)}}' \bx\} \right)^p [f(\bx) - \gamma]$ and let $g_{p+1}(\bx) = \left(\sum_{j=1}^\ell \exp\{{\balpha^{(j)}}' \bx\} \right)^{p+1} [f(\bx) - \gamma]$.  We observe that:
\begin{equation*}
g_{p+1}(\bx) = \sum_{j=1}^\ell \exp\{{\balpha^{(j)}}' \bx\} g_{p}(\bx),
\end{equation*}
and that $g_p(\bx) \in \sage(E_{p+1}(\{\balpha^{(j)}\}_{j=1}^\ell))$ implies $\exp\{{\balpha^{(j)}}' \bx\} g_{p}(\bx) \in \sage(E_{p+2}(\{\balpha^{(j)}\}_{j=1}^\ell))$. Consequently, $g_p(\bx) \in \sage(E_{p+1}(\{\balpha^{(j)}\}_{j=1}^\ell))$ implies that $g_{p+1}(\bx) \in \sage(E_{p+2}(\{\balpha^{(j)}\}_{j=1}^\ell))$ as $\sage(E_{p+2}(\{\balpha^{(j)}\}_{j=1}^\ell))$ is closed under addition.  Since this implication is true for each fixed $\gamma$, we have the desired result.
\end{proof}

In Section~\ref{subsec:completeness}, we prove that the sequence $\{f_\sage^{(p)}\}$ converges to $f_\star$ as $p \rightarrow \infty$ for suitable families of signomials.  Consequently, the methodology described in this section leads to a convergent sequence of lower bounds, computed by solving increasingly larger relative entropy optimization problems.

\begin{example} \label{example:uncp}
Consider the signomial $f(\bx)$ from \eqref{eq:sig2} in Example~\ref{example:unc}.  The lower bound $f_\sage^{(0)} = -1.426$, and this is not tight.  The next level of the hierarchy presented above leads to the improved bound $f_\sage^{(1)} = -1.395$.
\end{example}

\subsection{From Unconstrained SPs to Constrained SPs: Algebraic Certificates of Compactness} \label{subsec:unc2con}
Our approach to developing relaxations for constrained SPs is qualitatively different from that in the unconstrained case.  Specifically, the hierarchy of relaxations that we develop for constrained SPs in Section~\ref{subsec:constrained} does not directly specialize to the hierarchy developed for unconstrained SPs in Section~\ref{subsec:unconstrained}.  The reason for this discrepancy is that our development in the constrained setting is based on the assumption that the constraint set is a compact set.  Concretely, fix a collection of exponents $\{\balpha^{(j)}\}_{j=1}^\ell \subset \R^n$, and let $f(\bx)$ and $C = \{g_i(\bx)\}_{i=1}^m$ be signomials with respect to these exponents.  Consider the constrained SP:
\begin{equation}
f_\star = \inf_{\bx \in \R^n} ~ f(\bx) ~~~ \mathrm {s.t.} ~ g_i(\bx) \geq 0, ~ i = 1,\dots, m. \label{eq:generalsp}
\end{equation}
The hierarchy we propose in Section~\ref{subsec:constrained} is shown to be complete in Section~\ref{subsec:completeness} (see Theorem~\ref{theorem:con}) under the premise that the constraint set $\K_C = \{\bx \in \R^n ~|~ g(\bx) \geq 0 ~ \forall g \in C\}$ is compact.  Crucially, the proof of completeness relies on the assumption that the compactness of $\K_C$ is explicitly enforced via inequalities of the form $U \geq \exp\{{\balpha^{(j)}}' \bx\} \geq L, ~ j=1,\dots,\ell$ (for appropriate $U,L \in \R_{++}$) in the list of constraints $C = \{g_i(\bx)\}_{i=1}^m$.  Such explicit enforcements of compactness enable us to appeal to representation theorems from real algebraic geometry that require appropriate Archimedeanity assumptions (see the survey \cite{Mar2008}), a point that is also significant in the development of hierarchies of SDP relaxations based on sum-of-squares decompositions for polynomial optimization problems \cite{Las2001,Par2000,Par2003,Put1993,Sch1991}.

As such a natural ``unconstrained'' counterpart of the constrained case is a setting in which we wish to minimize a signomial subject to a constraint set of the form $\{\bx \in \R^n ~|~ U \geq \exp\{{\balpha^{(j)}}' \bx\} \geq L, ~ j=1,\dots,\ell\}$ for some fixed $U,L \in \R_{++}$.  Alternatively, if we have some additional knowledge that a global minimizer of a signomial belongs to a set of this form for some known $U,L$, then the methods developed in Section~\ref{subsec:constrained} provide an alternative hierarchy of convergent lower bounds to those in Section~\ref{subsec:unconstrained}.

\subsection{A Hierarchy for Constrained SPs} \label{subsec:constrained}

Here we describe a hierarchy of relative entropy relaxations based on SAGE decompositions for constrained SPs.  In analogy with the unconstrained case, we can recast \eqref{eq:generalsp} as:
\begin{equation}
f_\star = \sup_{\gamma \in \R} ~ \gamma ~~~ \mathrm {s.t.} ~ f(\bx) - \gamma \geq 0 ~ \forall \bx \in \K_C. \label{eq:generalspnng}
\end{equation}
As the original constrained SP is computationally intractable to solve in general, certifying nonnegativity of a signomial over a constraint set defined by signomials is also intractable.  In order to obtain lower bounds on $f_\star$ that are tractable to compute, our approach is again to proceed by employing efficiently computable sufficient conditions for certifying nonnegativity of a signomial $f(\bx) - \gamma$ over a constraint set $\K_C$ defined by signomial inequalities.

A basic approach based on weak duality for bounding \eqref{eq:generalspnng} is to replace the constraint by the following sufficient condition for nonnegativity:
\begin{equation*}
f_{WD} = \sup_{\gamma \in \R, \bmu \in \R^m_+} ~ \gamma ~~~ \mathrm{s.t.} ~ f(\bx) - \gamma - \sum_{i=1}^m \bmu_i g_i(\bx) \geq 0 ~ \forall \bx \in \R^n.
\end{equation*}
As the nonnegativity condition here is a sufficient condition for the constraint in \eqref{eq:generalspnng}, we note that $f_{WD} \leq f_\star$; this is the standard argument underlying weak duality.  However, there are two difficulties with this approach.  The first is that the constraint still involves the computationally intractable problem of checking nonnegativity of an arbitrary signomial.  The second difficulty, independent of the first one, is that the underlying SP is nonconvex, and therefore we do not expect strong duality to hold; as such, even if one could compute $f_{WD}$ it is in general the case that $f_{WD} < f_\star$.  To remedy the first difficulty, we replace the nonnegativity condition by one based on SAGE decomposability.  To address the second challenge we add valid inequalities that are consequences of the original set of constraints; although this idea of adding appropriate redundant constraints is an old one, it has been employed to particularly powerful effect in polynomial optimization problems \cite{Las2001,Par2000,Par2003}.

Formally, consider the set of signomials defined as products of the original set of constraint functions $C = \{g_i(\bx)\}_{i=1}^m$ in \eqref{eq:generalspnng}:
\begin{equation}
R_q(C) = \left\{\prod_{k=1}^q h_k ~|~ h_k \in \{1\} \cup C \right\}. \label{eq:validineq}
\end{equation}
Here $1$ represents the signomial that is identically equal to one for all $\bx \in \R^n$.  Based on this definition, we consider the following relaxation of \eqref{eq:generalspnng}:
\begin{equation}
\begin{aligned}
f_\sage^{(p,q)} = & \sup_{\substack{\gamma \in \R, \\ s_h(\bx) \in \sage(E_p(\{\balpha^{(j)}\}_{j=1}^\ell))}} ~ \hspace{1in}\gamma \\ & \hspace{0.65in}\mathrm{s.t.} \hspace{0.75in} f(\bx) - \gamma - \sum_{h(\bx) \in R_q(C)} s_h(\bx) h(\bx)  \\ & \hspace{2.4in} \in \sage(E_{p+q}(\{\balpha^{(j)}\}_{j=1}^\ell)).
\end{aligned} \label{eq:sagelowerboundpq}
\end{equation}
For a fixed $q$, the number of terms in the set $R_q(C)$ is at most $(\ell+1)^q$, and consequently the sum consists of a finite number of terms.  Moreover, computing $f^{(p,q)}_\sage$ can be recast as a relative entropy optimization problem via Proposition~\ref{prop:sagecone} as the coefficients of the signomial $f(\bx) - \gamma - \sum_{h \in R_q(C)} s_h(\bx) h(\bx)$ are linear functionals of $\gamma$ and the coefficients of the SAGE functions $s_h(\bx)$.  Note that for $h \in R_q(C)$ and $s_h(\bx) \in \sage(E_p)$, the signomial $s_h(\bx) h(\bx)$ is nonnegative on the set $\K_C$, thus corresponding to a valid constraint.  Hence, for a fixed value of $\gamma$ and fixed SAGE functions $s_h(\bx)$, the condition $f(\bx) - \gamma - \sum_{h \in R_q(C)} s_h(\bx) h(\bx) \in \sage(E_{p+q})$ is a sufficient condition for the nonnegativity of $f(\bx) - \gamma$ on the constraint set $\K_C$.  Consequently we have that $f^{(p,q)}_\sage \leq f_\star$ for all $p,q \in \mathbb{Z}_+$.

The next result records the fact that $f^{(p,q)}_\sage$ provides increasingly tighter bounds as $p,q$ grow larger.  This improvement comes at the expense of solving larger relative entropy optimization problems for bigger values of $p,q$.
\begin{lemma} \label{lemma:conmonotone}
Fix a set of exponents $\{\balpha^{(j)}\}_{j=1}^\ell \subset \R^n$ with $\balpha^{(1)} = \bzero$, and let $f(\bx)$ and $C = \{g_i(\bx)\}_{i=1}^m$ be signomials with respect to these exponents.  Then we have that $f_\sage^{(p,q)} \leq f_\sage^{(p',q')}$ for $p \leq p', q \leq q'$ with $p,p',q,q' \in \mathbb{Z}_+$.
\end{lemma}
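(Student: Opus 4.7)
The plan is to establish monotonicity in each index separately and then combine them, i.e., to show $f_\sage^{(p,q)} \leq f_\sage^{(p+1,q)}$ and $f_\sage^{(p,q)} \leq f_\sage^{(p,q+1)}$, and then chain these inequalities to cover arbitrary $(p,q) \leq (p',q')$.

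The two monotonicity ingredients I would verify first are the inclusions $E_p(\{\balpha^{(j)}\}_{j=1}^\ell) \subseteq E_{p+1}(\{\balpha^{(j)}\}_{j=1}^\ell)$ and $R_q(C) \subseteq R_{q+1}(C)$. The first is immediate from the definition, since any nonnegative-integer combination with $\sum_j \lambda_j \leq p$ also satisfies $\sum_j \lambda_j \leq p+1$. The second follows by padding any $q$-fold product $\prod_{k=1}^q h_k$ with a factor of $1 \in \{1\}\cup C$ to obtain a $(q{+}1)$-fold product, so the same signomial lies in $R_{q+1}(C)$. As an immediate consequence, $\sage(E_p) \subseteq \sage(E_{p+1})$ and $\sage(E_{p+q}) \subseteq \sage(E_{p+q+1})$, simply because every AM/GM-exponential with exponents in the smaller set has exponents in the larger set.

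With these inclusions in hand, monotonicity in $p$ is handled as follows: given a feasible tuple $(\gamma, \{s_h\}_{h \in R_q(C)})$ for the level-$(p,q)$ program, each $s_h$ already lies in $\sage(E_{p+1})$ by the containment above, and the residual $f(\bx) - \gamma - \sum_{h \in R_q(C)} s_h(\bx) h(\bx)$ already lies in $\sage(E_{p+q}) \subseteq \sage(E_{(p+1)+q})$. So the same tuple is feasible at level $(p+1,q)$, giving $f_\sage^{(p,q)} \leq f_\sage^{(p+1,q)}$. For monotonicity in $q$, I would extend a feasible tuple $(\gamma, \{s_h\}_{h \in R_q(C)})$ to the index set $R_{q+1}(C)$ by setting $s_h \equiv 0$ for $h \in R_{q+1}(C) \setminus R_q(C)$; the zero signomial is trivially a SAGE function (it is an AM/GM-exponential with zero coefficients), so these new multipliers are admissible, the sum $\sum_{h \in R_{q+1}(C)} s_h h$ is unchanged, and the residual lies in $\sage(E_{p+q}) \subseteq \sage(E_{p+(q+1)})$.

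Chaining the two one-step inequalities yields $f_\sage^{(p,q)} \leq f_\sage^{(p',q)} \leq f_\sage^{(p',q')}$ for $p \leq p'$ and $q \leq q'$, which is the desired result. There is no substantive obstacle here; the only delicate point worth stating carefully is that enlarging the exponent set preserves SAGE membership — a fact that follows directly from Definition~\ref{defn:sage} and the AM/GM-exponential characterization in \eqref{eq:cage}, since an AM/GM-exponential supported on exponents in a set $M_1$ is, tautologically, also supported on exponents in any $M_2 \supseteq M_1$.
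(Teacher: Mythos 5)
Your proof is correct and follows essentially the same route as the paper, which rests on the three inclusions $\sage(E_p) \subseteq \sage(E_{p'})$, $R_q(C) \subseteq R_{q'}(C)$, and $\sage(E_{p+q}) \subseteq \sage(E_{p'+q'})$ for $p \leq p'$, $q \leq q'$. You merely spell out the feasibility-transfer argument (including the zero-multiplier padding for $R_{q'}(C) \setminus R_q(C)$) that the paper leaves implicit, which is a faithful elaboration rather than a different approach.
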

\begin{proof}
This result follows from the observations that $\sage(E_p) \subset \sage(E_{p'})$, $R_q(C) \subset R_{q'}(C)$, and $\sage(E_{p+q}) \subset \sage(E_{p'+q'})$ if $p \leq p', q \leq q'$.
\end{proof}

In Section~\ref{subsec:completeness} we show that for SPs with compact constraint sets, $f^{(p,q)}_\sage \rightarrow f_\star$ as $p,q  \rightarrow \infty$.  These results demonstrate the value in the redundant constraints employed in the bounds $f_\sage^{(p,q)}$.  We also note that GPs are solved exactly at the first level of the hierarchy, i.e., the lower bound $f_\sage^{(0,1)}$ equals the global minimum for GPs.

\begin{example}
As our first example, consider a constrained SP in which the objective to be minimized is the signomial \eqref{eq:sig1} from Example~\ref{example:unc}, and the constraint set is convex as follows:
\small
\begin{eqnarray*}
\{\bx \in \R^3 ~|~&& 8 \exp\{10.2 \bx_1\} +8 \exp\{9.8 \bx_2\} + 8 \exp\{8.2 \bx_3\} \\ && +6.4 \exp\{1.0857\bx_1 +  1.9069 \bx_2 + 1.6192 \bx_3\} \leq 1\}.
\end{eqnarray*}
\normalsize
Notice that the exponents in $f(\bx)$ in \eqref{eq:sig1} and ones in the constraint here are common.  Although this constraint set is a convex set, this problem is not a GP because the objective function is a signomial consisting of two negative coefficients.  Applying the first level of the SAGE hierarchy described in this subsection gives the lower bound $f^{(0,1)}_\sage = -0.6147$.  This bound is tight and it is achieved at the feasible point $\bx^\star = (-0.4312, -0.3823, -0.6504)'$.
\end{example}

\begin{example}
As our next example, consider a constrained SP in which the objective to be minimized is again the signomial \eqref{eq:sig1} from Example~\ref{example:unc}, and the constraint is given by following signomial:
\small
\begin{equation}
\begin{aligned}
g(\bx) =& -8 \exp\{10.2 \bx_1\} -8 \exp\{9.8 \bx_2\} - 8 \exp\{8.2 \bx_3\} \\ & + 0.7410 \exp\{1.5089\bx_1 +  1.0981 \bx_2 + 1.3419 \bx_3\} \\ & - 0.4492 \exp\{1.0857\bx_1 +  1.9069 \bx_2 + 1.6192 \bx_3\} \\ & + 1.4240 \exp\{1.0459 \bx_1 + 0.0492 \bx_2 + 1.6245 \bx_3\}.
\end{aligned}\label{eq:sigcon1}
\end{equation}
\normalsize
This constraint was chosen in a similar manner to the signomials in Example~\ref{example:unc}.  Keeping the exponents fixed to the same values as those in \eqref{eq:sig1}, the first three coefficients are chosen to be equal to $-8$ and the last three are chosen to be random Gaussians with mean $0$ and standard deviation $1$.
Thus, we wish to minimize $f(\bx)$ from \eqref{eq:sig1} subject to the constraint that $g(\bx) \geq 0$.  The first level of the hierarchy for constrained SPs gives us the lower bound $f^{(0,1)}_\sage = -0.7372$.  This bound is also tight and it is achieved at $\bx^\star  = (0.0073, 0.0065, 0.0130)'$.
\end{example}

\subsection{Dual Perspectives and Extracting Optimal Solutions} \label{subsec:dual}
The hierarchies of relaxations described in the previous subsections have an appealing dual perspective.  Focussing on the unconstrained case for simplicity, the problem of minimizing a signomial $f(\bx) = \sum_{j=1}^\ell \bc_j \exp\{ {\balpha^{(j)}}' \bx\}$ with $\balpha^{(1)} = \bzero$ can be recast as:
\begin{equation*}
f_\star = \inf_{\bv \in \R^\ell} ~ \bc' \bv ~~~ \mathrm{s.t.} ~ \bv \in \{(1, \dots, \exp\{{\balpha^{(\ell)}}' \bx \}) ~|~ \bx \in \R^n\}.
\end{equation*}
As the objective is linear, this problem can equivalently be expressed as a convex program by convexifying the constraint set to $\mathrm{conv}\{(1, \dots, \exp\{{\balpha^{(\ell)}}' \bx \}) ~|~ \bx \in \R^n\}$.  However, checking membership in $\mathrm{conv}\{(1, \dots, \exp\{{\balpha^{(\ell)}}' \bx \}) ~|~ \bx \in \R^n\}$ is in general an intractable problem.

The methods described in Section~\ref{subsec:unconstrained} can be viewed as providing a sequence of tractable \emph{outer convex approximations} to the set $\{(1, \dots, \exp\{{\balpha^{(\ell)}}' \bx \}) ~|~ \bx \in \R^n\}$.  Specifically, the dual corresponding to the SAGE lower bound $f^{(p)}_\sage$ \eqref{eq:sagelowerboundp} is:
\begin{equation}
f^{(p)}_\sage = \inf_{\bv \in \R^\ell} ~ \bc' \bv ~~~ \mathrm{s.t.} ~ \bv \in \mathcal{W}_p(\bzero,\dots,\balpha^{(\ell)}), \label{eq:sagepdual}
\end{equation}
where each $\mathcal{W}_p(\bzero,\dots,\balpha^{(\ell)}) \subset \R^\ell$ is a convex set that contains $\{(1, \dots, \exp\{{\balpha^{(\ell)}}' \bx \}) ~|~ \allowbreak \bx \in \R^n\}$.  The set $\mathcal{W}_0(\bzero,\dots,\balpha^{(\ell)})$ corresponding to $f^{(0)}_\sage$ \eqref{eq:sagelowerboundp} is:
\begin{equation}
\mathcal{W}_0(\bzero,\dots,\balpha^{(\ell)}) = \Big\{\bv \in \R^\ell_+ ~|~ \bv_1 = 1 ~\mathrm{and}~ \bv \in \mathcal{C}^\star_\sage(\bzero,\dots,\balpha^{(\ell)})\Big\}, \label{eq:dualW0}
\end{equation}
where $\mathcal{C}^\star_\sage(\bzero,\dots,\balpha^{(\ell)})$ is characterized in \eqref{eq:sagedualcone}.  The next result shows that $\mathcal{W}_0(\bzero,\allowbreak\dots,\balpha^{(\ell)})$ is an outer approximation of the set $\{(1, \dots, \exp\{{\balpha^{(\ell)}}' \bx \}) ~|~ \bx \in \R^n\}$:
\begin{lemma} \label{lemma:expcurve}
Fix a set of exponents $\{\balpha^{(j)}\}_{j=1}^\ell \subset \R^n$ with $\balpha^{(1)} = \bzero$, and consider the convex set $\mathcal{W}_0(\bzero,\dots,\balpha^{(\ell)})$ described in \eqref{eq:dualW0}.  Then we have that $\{(1, \dots, \allowbreak \exp\{{\balpha^{(\ell)}}' \bx \}) ~|~ \bx \in \R^n\} \subseteq \mathcal{W}_0(\bzero,\dots,\balpha^{(\ell)})$.
\end{lemma}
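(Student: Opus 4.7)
The plan is a direct verification: given $\bx \in \R^n$, I construct the dual certificate vectors $\btau^{(i)}$ explicitly so that the point $\bv = (1, \exp\{{\balpha^{(2)}}' \bx\}, \dots, \exp\{{\balpha^{(\ell)}}' \bx\})$ satisfies every defining inequality of $\mathcal{C}^\star_\sage(\bzero, \dots, \balpha^{(\ell)})$ from \eqref{eq:sagedualcone}. Once that is done, membership in $\mathcal{W}_0(\bzero,\dots,\balpha^{(\ell)})$ follows from \eqref{eq:dualW0}, since $\bv_1 = \exp\{\bzero' \bx\} = 1$ and $\bv \in \R^\ell_{++} \subset \R^\ell_+$ automatically.

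The key observation is that for this choice of $\bv$, the logarithmic term in the defining inequality $\bv_i \log(\bv_i/\bv_j) \leq (\balpha^{(i)} - \balpha^{(j)})' \btau^{(i)}$ takes a very clean form: $\log(\bv_i / \bv_j) = ({\balpha^{(i)}} - {\balpha^{(j)}})' \bx$. Therefore the left side becomes $\bv_i (\balpha^{(i)} - \balpha^{(j)})' \bx$. This suggests choosing $\btau^{(i)} = \bv_i \bx$ for each $i = 1, \dots, \ell$; with this choice the right side equals $(\balpha^{(i)} - \balpha^{(j)})' \bv_i \bx$, which matches the left side exactly. So every one of the $\ell^2$ inequalities holds with equality, and $\bv$ is in the dual cone.

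The only potential subtlety is the case $i = j$, where both sides are zero (using the convention $0 \log(\bv_i/\bv_i) = 0$), and the fact that for $\bv \in \R^\ell_{++}$ the entropy expression $\bv_i \log(\bv_i/\bv_j)$ is well-defined without reference to the conventions for $0 \log(0/\cdot)$ or $\nu \log(\nu/0)$. I would note this briefly and then conclude that $\bv \in \mathcal{C}^\star_\sage(\bzero,\dots,\balpha^{(\ell)})$, hence $\bv \in \mathcal{W}_0(\bzero,\dots,\balpha^{(\ell)})$.

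I do not expect any real obstacle: the whole argument is locating the right $\btau^{(i)}$, and the scaling $\btau^{(i)} = \bv_i \bx$ is essentially forced by dimensional analysis of the inequality. The proof will therefore be short, a single paragraph of computation after recalling the definitions.
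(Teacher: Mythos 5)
Your proposal is correct and matches the paper's proof exactly: the paper also certifies membership by taking $\btau^{(i)} = \exp\{{\balpha^{(i)}}' \bx\}\, \bx = \bv_i \bx$ in the characterization \eqref{eq:sagedualcone}, so the defining inequalities hold (with equality) just as you compute. No gaps; your write-up simply spells out the verification the paper leaves implicit.
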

\begin{proof}
For each $\bx \in \R^n$, we note that $\bv = (1,\dots,\exp\{{\balpha^{(\ell)}}'\bx \})' \in \mathcal{W}_0(\bzero,\dots,\balpha^{(\ell)})$ by setting $\btau^{(i)} = \exp\{ {\balpha^{(i)}}' \bx\} ~ \bx$ in the characterization \eqref{eq:sagedualcone} of $\mathcal{C}^\star_\sage(\bzero,\allowbreak \dots,\allowbreak \balpha^{(\ell)})$.
\end{proof}

This lemma gives an alternative justification via duality that $f_\sage^{(0)} \leq f_\star$.  The higher levels of the hierarchy in Section~\ref{subsec:unconstrained} corresponding to the improved bounds $f_\sage^{(p)}$ provide tighter convex approximations of the set $\{(1, \dots, \allowbreak \exp\{{\balpha^{(\ell)}}' \bx \}) ~|~ \allowbreak \bx \in \R^n\}$.  For example, one can check that the set $\mathcal{W}_1(\bzero,\dots,\balpha^{(\ell)})$ is given as follows:
\begin{eqnarray*}
\mathcal{W}_1(\bzero,\dots,\balpha^{(\ell)}) &= \Big\{\bv \in \R^\ell_+ ~|~ & \bv_1 = 1 ~\mathrm{and}~ \exists \blambda^{(i)} \in \R^\ell, i =1,\dots,\ell, ~\mathrm{s.t.} \sum_{i=1}^\ell \blambda^{(i)} = \bv, \\ && [{\blambda^{(1)}}',\dots,{\blambda^{(\ell)}}']' \in \mathcal{C}_\sage^\star\left(\bzeta^{(1,1)}, \dots, \bzeta^{(\ell,\ell)} \right)\Big\},
\end{eqnarray*}
where $\bzeta^{(i,j)} = \balpha^{(i)} + \balpha^{(j)}$ for $i,j=1,\dots,\ell$.


The dual perspective presented here also provides a method to check whether the lower bound $f_\sage^{(p)}$ is tight, i.e., $f^{(p)}_\sage = f_\star$.  Focussing on the first level of the hierarchy with $p=0$, suppose an optimal solution $\hat{\bv} \in \R^\ell_+$ of the convex program \eqref{eq:sagepdual} is such that there exists a corresponding point $\hat{\bx} \in \R^n$ so that $\hat{\bv}_j = \exp\{{\balpha^{(j)}}' \hat{\bx} \}$ for each $j = 1,\dots,\ell$.  Then it follows that $f_\sage^{(0)} = f_\star$ because the optimal solution $\hat{\bv}$ belongs to the set $\{(1, \dots, \exp\{{\balpha^{(\ell)}}' \bx \}) ~|~ \bx \in \R^n\}$, and the point $\hat{\bx} \in \R^n$ is the optimal solution of the original SP \eqref{eq:generalspunc}.  If $\hat{\bv} \notin \{(1, \dots, \exp\{{\balpha^{(\ell)}}' \bx \}) ~|~ \bx \in \R^n\}$, then a natural heuristic to attempt to obtain a good approximation of a minimizer of the original SP is to project $\log(\hat{\bv})$ onto the image of the $\ell \times n$ matrix formed by setting the rows to be the $\balpha^{(j)}$'s.  Denoting this projection by $\bdelta \in \R^\ell$, we obtain an upper bound of the optimal value of the SP \eqref{eq:generalspunc} by computing $\sum_{j=1}^\ell \bc_j \exp\{\bdelta_j\}$.  This is the technique employed in Example~\ref{example:unc} to produce upper bounds.

The dual perspective in the constrained case is conceptually similar.  Fix a set of exponents $\{\balpha^{(j)}\}_{j=1}^\ell \subset \R^n$ with $\balpha^{(1)} = \bzero$, let $f(\bx) = \sum_{j=1}^\ell \bc^{(f)}_j \exp\{{\balpha^{(j)}}' \bx\}$ be the objective, and let $C = \{g_i(\bx)\}_{i=1}^m$ specify the set of signomials defining the constraints with $g_i(\bx) = \sum_{j=1}^\ell \bc^{(g_i)}_j \exp\{{\balpha^{(j)}}' \bx\}$. One can recast the constrained SP \eqref{eq:generalsp} as:
\begin{eqnarray*}
f_\star = \inf_{\bv \in \R^\ell_+} ~ {\bc^{(f)}}' \bv ~ \mathrm{s.t.} ~ \bv \in \{(1, \dots, \exp\{{\balpha^{(\ell)}}' \bx \}) ~|~ \bx \in \R^n\}, ~ {\bc^{(g_i)}}' \bv \geq 0, ~ i=1,\dots,m.
\end{eqnarray*}
As the objective function is linear, the constraint set can be replaced by its convex hull. By considering the duals of the convex programs \eqref{eq:sagelowerboundpq} corresponding to $f_\sage^{(p,q)}$, one can again interpret SAGE relaxations as providing a sequence of computationally tractable convex outer approximations of the above constraint set:
\begin{equation}
f^{(p,q)}_\sage = \inf_{\bv \in \R^\ell} ~ {\bc^{(f)}}' \bv ~~~ \mathrm{s.t.} ~ \bv \in \mathcal{W}_{p,q}(\bzero,\dots,\balpha^{(\ell)}; \bc^{(g_1)}, \dots, \bc^{(g_m)}). \label{eq:sagepqdual}
\end{equation}
That is, $\mathcal{W}_{p,q}(\bzero,\dots,\balpha^{(\ell)}; \bc^{(g_1)}, \dots, \bc^{(g_m)})$ can be viewed as a convex outer approximation of the set $\{(1, \dots, \exp\{{\balpha^{(\ell)}}' \bx \}) ~|~ \bx \in \R^n\} \cap \{\bv ~|~ {\bc^{(g_i)}}' \bv \geq 0, ~ i=1,\dots,m\}$.  For example, the first level of the hierarchy corresponding to $f^{(0,1)}$ is given by:
\begin{eqnarray*}
\mathcal{W}_{0,1}(\bzero,\dots,\balpha^{(\ell)}; \bc^{(g_1)}, \dots, \bc^{(g_m)}) &= \Big\{\bv \in \R^\ell_+ ~|~ & \bv_1 = 1, ~ \bv \in \mathcal{C}^\star_\sage(\bzero,\dots,\balpha^{(\ell)}), \\ && \bv' \bc^{(g_i)} \geq 0, ~ i=1,\dots,m \Big\}.
\end{eqnarray*}
Analogous to Lemma~\ref{lemma:expcurve}, one can show that $\mathcal{W}_{0,1}(\bzero,\dots,\balpha^{(\ell)}; \bc^{(g_1)}, \dots, \bc^{(g_m)})$ is indeed a convex outer approximation of the set $\{(1, \dots, \exp\{{\balpha^{(\ell)}}' \bx \}) ~|~ \bx \in \R^n\} \allowbreak \cap \{\bv ~|~ {\bc^{(g_i)}}' \bv \geq 0, ~ i=1,\dots,m\}$.  As with the unconstrained setting, one can check whether $f^{(p,q)}_\sage = f_\star$ by verifying that the optimal solution of the dual problem \eqref{eq:sagepqdual} lies in the set $\{(1, \dots, \exp\{{\balpha^{(\ell)}}' \bx \}) ~|~ \bx \in \R^n\} \allowbreak \cap \{\bv ~|~ {\bc^{(g_i)}}' \bv \geq 0, ~ i=1,\dots,m\}$.


\section{Completeness of Hierarchies} \label{sec:completeness}
In this section, we show that the hierarchies of lower bounds presented in the previous section converge to the optimal value for suitable classes of SPs.  We also contrast the methods developed in this paper with those based on SDPs for polynomial optimization problems \cite{Las2001,Par2000,Par2003}, highlighting the point that relative entropy relaxations based on SAGE decompositions are better suited to SPs than SDP relaxations based on sum-of-squares techniques.

\subsection{Completeness of Relative Entropy Hierarchies for SPs with Rational Exponents} \label{subsec:completeness}

Our completeness results are stated for SPs in which the signomials consist of rational exponent vectors.  Specifically, the high-level idea behind our proofs is as follows: We transform the SPs to appropriate polynomial optimization problems over the nonnegative orthant by suitably clearing denominators in the exponents of the SP, then apply Positivstellensatz results from real algebraic geometry to obtain representations of positive polynomials \cite{Kri1964,Mar2008,Ste1974} in terms of SAGE functions (which can be viewed as nonnegative polynomials over the orthant modulo an ideal), and finally transform the results back to derive a consequence in terms of signomials.  We emphasize that the transformation of an SP to a polynomial optimization problem by clearing denominators in the exponents is purely an element of our proof technique due to the appeal to representation theorems from real algebraic geometry.  Our algorithmic methodology described in Section~\ref{sec:hierarchy} does not require such clearing of denominators, and we highlight the significance of this point in Section~\ref{subsec:polyopt}.

\paragraph{Unconstrained SPs} Our transformation of SPs with rational exponents leads to polynomial optimization problems over the positive orthant intersected with an algebraic variety.  However, our appeal to Positivstellensatz results from real algebraic geometry gives representations of polynomials over the \emph{nonnegative} orthant intersected with an algebraic variety.  Further, these representation theorems also entail certain Archimedeanity conditions as discussed in Appendix~\ref{app:rep}.  In order to overcome both these issues, we require an additional condition on the exponents as elaborated in the following result.

\begin{theorem} \label{theorem:unc}
Let $\{\balpha^{(j)}\}_{j=1}^\ell \subset \Q^n$ be a collection of rational exponents with the following properties: $(a)$ the first $n$ exponents $\{\balpha^{(j)}\}_{j=1}^n$ are linearly independent, $(b)$ $\balpha^{(n+1)} = \bzero$, and $(c)$ the remaining exponents $\{\balpha^{(j)}\}_{j=n+2}^\ell$ lie in the convex hull $\mathrm{conv}(\{\balpha^{(i)}\}_{i=1}^{n+1})$, and $\balpha^{(j)} \notin \mathrm{conv}(\{\balpha^{(i)}\}_{i=1}^{n})$ for $j=n+2,\dots,\ell$.  Suppose $f(\bx)$ is a signomial with respect to these exponents such that $\inf_{\bx \in \R^n} ~ f(\bx) > 0$ and that $\bc_j > 0$ for each $j \in \{1,\dots,n\}$.  Then there exists $p \in \mathbb{Z}_+$ such that $\left(\sum_{j=1}^\ell \exp\{{\balpha^{(j)}}' \bx\} \right)^p f(\bx) \in \sage(E_{p+1}(\{\balpha^{(j)}\}_{j=1}^\ell))$.
\end{theorem}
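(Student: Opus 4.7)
The strategy is to translate the signomial problem into a polynomial positivity question on the nonnegative orthant, apply a P\'olya-type Positivstellensatz from real algebraic geometry, and translate the representation back to a SAGE decomposition. First, I invoke Corollary~\ref{corl:invariance} to apply the invertible linear transformation on exponents sending $\{\balpha^{(1)},\ldots,\balpha^{(n)}\}$ to the standard basis $\{\mathbf{e}_1,\ldots,\mathbf{e}_n\}$ of $\R^n$; a matching invertible change of variable on $\bx$ preserves $\inf_\bx f$ as well as the form of the multiplier $\sum_{j=1}^\ell\exp\{{\balpha^{(j)}}'\bx\}$. I may therefore assume $\balpha^{(i)}=\mathbf{e}_i$ for $i=1,\ldots,n$, $\balpha^{(n+1)}=\bzero$, and that each remaining $\balpha^{(j)}\in\Q^n$ satisfies $\balpha^{(j)}_i\geq 0$ and $\sum_i\balpha^{(j)}_i<1$.

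Next, let $N$ be a common denominator of the components of all $\balpha^{(j)}$ and set $\tilde\balpha^{(j)}:=N\balpha^{(j)}\in\mathbb{Z}_+^n$. The substitution $y_i=\exp(x_i/N)$ converts $f(\bx)$ into the polynomial $p(\by)=\sum_{j=1}^\ell\bc_j\by^{\tilde\balpha^{(j)}}$ on $\R^n_{++}$ and converts the multiplier into the posynomial $Q(\by):=\sum_j\by^{\tilde\balpha^{(j)}}$. The hypothesis $\inf f>0$ gives $p>0$ on the open orthant. Because the exponents $\tilde\balpha^{(i)}=N\mathbf{e}_i$ ($i=1,\ldots,n$) and $\tilde\balpha^{(n+1)}=\bzero$ are the vertices of the Newton polytope $N\Delta_n$ with $\Delta_n=\{v\in\R_+^n:\sum_i v_i\leq 1\}$, while $|\tilde\balpha^{(j)}|<N$ strictly for $j\geq n+2$, and since the coefficients at those vertices are positive ($\bc_i>0$ for $i\leq n$), the monomials $\bc_i y_i^N$ dominate as $\|\by\|\to\infty$; together with continuity this extends $p$ to a strictly positive polynomial on the closed orthant $\R^n_+$. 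Homogenizing by an extra variable $y_0$ yields the form $\hat p(y_0,\by)=y_0^N p(\by/y_0)$ which is strictly positive on $\R^{n+1}_+\setminus\{\bzero\}$, since on $y_0=0$ only the positive monomials $\bc_i y_i^N$ survive.

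By P\'olya's theorem, there exists $P\in\mathbb{Z}_+$ such that $(y_0+\sum_i y_i)^P\hat p$ has all nonnegative coefficients. The multiplier appearing in the theorem statement is $Q$ rather than the coordinate sum, and bridging this gap is the main technical obstacle. The resolution rests on the fact that the support of $Q$ already contains every vertex of the Newton polytope of $p$; a generalized P\'olya--Handelman-type Positivstellensatz on the polytope $N\Delta_n$ (with the Archimedean condition supplied by the dominance of the vertex monomials) then yields that $Q^p p$ has all nonnegative coefficients for $p$ sufficiently large. Equivalently, $\bigl(\sum_{j=1}^\ell\exp\{{\balpha^{(j)}}'\bx\}\bigr)^p f(\bx)$ is a posynomial with exponents contained in $E_{p+1}(\{\balpha^{(j)}\}_{j=1}^\ell)$, and any posynomial is trivially a SAGE function. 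A useful auxiliary ingredient for this step is the closure of $\sage$ under multiplication by posynomials---multiplying an AM/GM-exponential by a single positive exponential $\exp\{\bgamma'\bx\}$ merely translates all of its exponents by $\bgamma$ and yields another AM/GM-exponential---which permits converting between the classical and custom P\'olya multipliers by chaining posynomial factors.
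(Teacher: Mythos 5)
Your reduction up to the application of P\'olya's theorem is sound: the normalization of exponents via Corollary~\ref{corl:invariance}, the clearing of denominators, the extension of positivity from the open to the closed orthant (since $\inf_{\by\in\R^n_{++}}p=\inf_\bx f>0$), and the homogenization argument using condition $(c)$ so that only the vertex monomials $\bc_i y_i^N$ survive on $\{y_0=0\}$ are all correct. The genuine gap is exactly the step you yourself flag as ``the main technical obstacle.'' Classical P\'olya certifies nonnegativity of the coefficients of $(y_0+\sum_{i=1}^n y_i)^P\hat p$, and this multiplier, translated back to signomials, involves the fractional exponents $\balpha^{(i)}/N$ and their sums; these do not lie in $E_{p+1}(\{\balpha^{(j)}\}_{j=1}^\ell)$, whereas the theorem demands the specific multiplier $\big(\sum_{j=1}^\ell\exp\{{\balpha^{(j)}}'\bx\}\big)^p$. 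The ``generalized P\'olya--Handelman-type Positivstellensatz'' that would give nonnegativity of the coefficients of $Q^p\,p$ is neither stated precisely nor proved, and it is not an off-the-shelf result: P\'olya's theorem is specific to the coordinate-sum multiplier, and coefficientwise conclusions after multiplying by powers of a general posynomial $Q$ are delicate (they depend on the support of $Q$ relative to the Newton polytope and on boundary behavior), so this is precisely the content that needs an argument. Your proposed repair via closure of $\sage$ under posynomial multiplication does not close it either: from a certificate for $(y_0+\sum_i y_i)^P\hat p$ you can only draw conclusions about products that still carry the factor $(y_0+\sum_i y_i)^P$; there is no way to divide it out or absorb it into $Q^p$, so you never arrive at a certificate of the required form.

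For comparison, the paper avoids this obstacle by lifting to $\ell$ variables rather than working in $n$: one variable $\by_j$ per exponent, with the dependencies among exponents implicitized as binomial equations $v_j(\by)=w_j(\by)$ (Lemma~\ref{lemma:uncimplicit} justifies passing to the closed orthant), homogenized using the variable $\by_{n+1}$ attached to the zero exponent --- this is where hypothesis $(c)$ enters, guaranteeing $\mathrm{deg}(v_j)>\mathrm{deg}(w_j)$ --- and with positivity of $\bc'\by$ on the homogeneous variety intersected with $\R^\ell_+\setminus\{\bzero\}$ checked separately on $\{\by_{n+1}>0\}$ (using $\inf f>0$) and on $\{\by_{n+1}=0\}$ (using $\bc_j>0$ for $j\le n$). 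The constrained P\'olya theorem of Dickinson--Povh (Proposition~\ref{prop:unchom}) then applies with multiplier $(\sum_{j=1}^\ell\by_j)^p$, which under the back-substitution $\by_j\leftarrow\exp\{{\balpha^{(j)}}'\bx\}$ is exactly the multiplier required by the theorem, while the ideal term vanishes. To salvage your $n$-variable route you would have to actually prove your generalized P\'olya claim for $Q$; the lifted formulation is what renders that unnecessary.
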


\paragraph{Note} The condition that $\bc_j > 0$ for each $j = 1,\dots,n$ is not restrictive because nonzero coefficients corresponding to extremal exponents must be positive for any globally nonnegative signomial; indeed, if any of these $\bc_j$'s for $j=1,\dots,n$ is negative then the signomial $f(\bx)$ is unbounded below (see Section~\ref{subsec:nngsig}).

This theorem is similar in spirit to Polya-Reznick type results on rational sum-of-squares representations of positive polynomials with uniform denominators \cite{Pol1928,Rez1995}.  Without loss of generality one can take the first $n$ exponent vectors $\{\balpha^{(j)}\}_{j=1}^\ell$ to be linearly independent to satisfy condition $(a)$; generalizing our discussion to address the case in which there are fewer than $n$ linearly independent vectors in $\{\balpha^{(j)}\}_{j=1}^\ell$ is straightforward.  Further, one can also add the zero vector to the collection of exponents $\{\balpha^{(j)}\}_{j=1}^\ell$ to satisfy condition $(b)$.  Condition $(c)$ is the key assumption in this theorem as it enables us to overcome the two difficulties described above preceding the statement of Theorem~\ref{theorem:unc} (see Appendix~\ref{app:uncproof}); note that one cannot simply add vectors to the collection of exponents to satisfy this condition because each of the coefficients $\bc_j$ for $j=1,\dots,n$ must be nonzero. Observe also that the structure of the exponents in the signomials considered in Example~\ref{example:unc} satisfy the conditions of Theorem~\ref{theorem:unc}.

To see that Theorem~\ref{theorem:unc} yields completeness for unconstrained SPs, suppose $f(\bx)$ is a signomial with respect to a set of exponents $\{\balpha^{(j)}\}_{j=1}^\ell$ satisfying the assumptions of Theorem~\ref{theorem:unc}, and let $f_\star$ denote the global minimum of $f(\bx)$.  For each $\epsilon < f_\star$, the signomial $f(\bx) - \epsilon$ is strictly positive for all $\bx \in \R^n$.  Theorem~\ref{theorem:unc} asserts that there exists a sufficiently large $p_\epsilon \in \mathbb{Z}_+$ such that $\left(\sum_{j=1}^\ell \exp\{{\balpha^{(j)}}' \bx\} \right)^{p_\epsilon} [f(\bx) - \epsilon] \in \sage(E_{p_\epsilon+1}(\{\balpha^{(j)}\}_{j=1}^\ell))$.  Combined with the monotonicity result of Lemma~\ref{lemma:uncmonotone}, one can conclude that $f^{(p)}_\sage \rightarrow f_\star$ as $p \rightarrow \infty$.

\paragraph{Constrained SPs} Our next result states that the hierarchy of lower bounds $f^{(p,q)}_\sage$ for constrained SPs also converges to the global optimum if the constraint set is compact.  Consider an SP specified in terms of an objective signomial $f(\bx)$ and constraint signomials $C = \{g_i(\bx)\}_{i=1}^m$, all defined with respect to a collection of exponents $\{ \balpha^{(j)}\}_{j=1}^\ell \subset \R^n$.  As discussed in Section~\ref{subsec:unc2con}, the compactness of the constraint set $\K_C = \{\bx ~|~ g_i(\bx) \geq 0, ~ i=1,\dots,m\}$ must be explicitly certified via the presence of redundant inequalities of the form $U \geq \exp\{{\balpha^{(j)}}' \bx\} \geq L, ~ j=1,\dots,\ell$ for suitable $U,L \in \R_{++}$ in the list of constraints $C$.  If $\K_C$ is a compact set, then such redundant constraints can always be added for appropriately large $U$ and small $L$.  (The compactness of the constraint set ensures that some of the difficulties in the unconstrained case -- the Archimedeanity requirement and the distinction between positive versus nonnegative orthant in appealing to representation theorems -- do not present obstacles in the constrained setting; specifically, we do not require additional conditions on the exponents beyond the assumption that they are rational vectors).

\begin{theorem} \label{theorem:con}
Fix a set of \emph{rational} exponents $\{\balpha^{(j)}\}_{j=1}^\ell \subset \Q^n$, and let $f(\bx)$ and $C = \{g_i(\bx) \}_{i=1}^m$ be signomials with respect to these exponents.  Let $R_q(C)$ be as defined in \eqref{eq:validineq}, and let the constraint set be $\K_C = \{\bx ~|~ g_i(\bx) \geq 0, ~ i=1,\dots,m\}$.  Suppose inequalities of the form $U \geq \exp\{{\balpha^{(j)}}'\bx\} \geq L, ~j=1,\dots,\ell$ for $U,L \in \R_{++}$ are formally specified in the list of constraints $C$ (explicitly serving as witnesses of the compactness of $\K_C$), and suppose $f(\bx)$ is strictly positive for all $\bx \in \K_C$.  Then there exist $p, q \in \mathbb{Z}_+$ and SAGE functions $s_h(\bx) \in \sage(E_p(\{\balpha^{(j)}\}_{j=1}^\ell))$ indexed by $h \in R_q(C)$ such that $f(\bx) - \sum_{h(\bx) \in R_q(C)} s_h(\bx) h(\bx) \in \sage(E_{p+q}(\{\balpha^{(j)}\}_{j=1}^\ell))$.
\end{theorem}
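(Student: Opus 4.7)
The plan is to reduce to a polynomial Positivstellensatz on a compact basic semialgebraic set inside the positive orthant, and then translate the resulting certificate back to the signomial world. Because each $\balpha^{(j)}$ is rational, I pick $d \in \mathbb{Z}_{++}$ with $\bbeta^{(j)} := d \balpha^{(j)} \in \mathbb{Z}^n$ for every $j$, and substitute $\by = (\exp\{\bx_1/d\},\dots,\exp\{\bx_n/d\}) \in \R^n_{++}$. Under this substitution, each signomial $\sum_j \bc_j \exp\{{\balpha^{(j)}}'\bx\}$ becomes the Laurent polynomial $\sum_j \bc_j \by^{\bbeta^{(j)}}$, and multiplying through by a large enough common monomial $\by^{\bmu}$ yields ordinary polynomials. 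The explicit compactness witnesses $L \leq \exp\{{\balpha^{(j)}}'\bx\} \leq U$ become $L \leq \by^{\bbeta^{(j)}} \leq U$, which confine the transformed constraint set $\tilde{\K}$ to a compact subset of $\R^n_{++}$ uniformly bounded away from the coordinate hyperplanes.

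Since $\tilde f(\by) > 0$ on the compact semialgebraic set $\tilde{\K}$, I invoke Schm\"udgen's Positivstellensatz, which for some $q \in \mathbb{Z}_+$ produces an identity $\tilde f(\by) = \sum_{h \in R_q} \sigma_h(\by)\,h(\by)$ whose product structure matches $R_q(C)$ after back-substitution and in which each $\sigma_h$ is a sum of squares of polynomials in $\by$. The technical obstacle is that a generic sum of squares on $\R^n_{++}$ is not a posynomial, so it need not translate to a SAGE function in $\bx$-coordinates. My intended resolution is a P\'olya-type multiplication: because $\tilde{\K}$ is bounded and sits strictly inside the positive orthant, there exists $N$ large enough so that each product $(\sum_{j=1}^\ell \by^{\bbeta^{(j)}})^N \sigma_h(\by)$ has all nonnegative coefficients. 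Multiplying both sides of the Schm\"udgen identity by the same uniform multiplier preserves the product structure over $R_q$ and converts every $\sigma_h$ into a posynomial.

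Translating back through $\by \mapsto \bx$, posynomials correspond to all-positive signomials (trivially SAGE), and the uniform multiplier is itself the posynomial $(\sum_j \exp\{{\balpha^{(j)}}'\bx\})^N$. Using that $\sage(E_p)$ and $\sage(E_{p+q})$ are closed under multiplication by posynomials (cf.\ Example~\ref{example:basicsage}), I absorb the P\'olya multiplier and obtain the target identity $f(\bx) - \sum_{h \in R_q(C)} s_h(\bx) h(\bx) \in \sage(E_{p+q}(\{\balpha^{(j)}\}_{j=1}^\ell))$ with each $s_h(\bx) \in \sage(E_p(\{\balpha^{(j)}\}_{j=1}^\ell))$, where $p$ is determined by the Schm\"udgen degree together with $N$. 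The main obstacle is this final absorption step: one has to track exponent supports carefully so that the multipliers stay in $E_p$ and the residual stays in $E_{p+q}$, and one has to apply the P\'olya positivity estimate simultaneously and uniformly to all the finitely many $\sigma_h$. The uniform separation of $\tilde{\K}$ from the coordinate hyperplanes -- guaranteed precisely by the compactness witnesses included in $C$ -- is what makes this uniform estimate available, and is ultimately the reason the theorem requires these witnesses to be formally listed among the constraints.
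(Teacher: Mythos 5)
There is a genuine gap at the heart of your argument: the P\'olya-type multiplication step is false as stated. After Schm\"udgen's Positivstellensatz the multipliers $\sigma_h$ are arbitrary sums of squares, and a sum of squares typically vanishes at points of the \emph{open} positive orthant (e.g.\ $\sigma(\by) = (\by_1 - \by_2)^2$ vanishes at $\by_1=\by_2=1$). If $(\sum_j \by^{\bbeta^{(j)}})^N \sigma_h(\by)$ had all nonnegative coefficients, it would be strictly positive at every strictly positive point unless identically zero; since it inherits the interior zeros of $\sigma_h$, no power $N$ can make its coefficients nonnegative. The compactness of the transformed feasible set and its separation from the coordinate hyperplanes are irrelevant here: coefficientwise nonnegativity is a global property of the polynomial, and P\'olya/Handelman-type conclusions require strict positivity of the polynomial being multiplied on the whole nonnegative orthant (minus the origin), which the $\sigma_h$ do not satisfy. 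A second structural problem is your choice of variables $\by_i = \exp\{\bx_i/d\}$: the monomials appearing in the Schm\"udgen certificate have exponents in $\tfrac{1}{d}\mathbb{Z}^n$ and bear no relation to the sets $E_p(\{\balpha^{(j)}\}_{j=1}^\ell)$, which consist only of nonnegative \emph{integer} combinations of the original exponents; so even a coefficientwise-nonnegative certificate in these variables would not translate into multipliers $s_h \in \sage(E_p(\{\balpha^{(j)}\}_{j=1}^\ell))$ and a residual in $\sage(E_{p+q}(\{\balpha^{(j)}\}_{j=1}^\ell))$ as the theorem demands.

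The paper's proof avoids both issues by never passing through sums of squares. It introduces one variable per exponent, $\by_j = \exp\{{\balpha^{(j)}}'\bx\}$, so that every monomial of degree at most $p$ in $\by$ corresponds exactly to an exponent in $E_p(\{\balpha^{(j)}\}_{j=1}^\ell)$, and it encodes the algebraic relations among these variables in an ideal $I$ obtained by clearing denominators (the explicit bounds $L \le \by_j \le U$ make the implicitized set the closure of the parametrized one, Lemma~\ref{lemma:conimplicit}). It then applies Krivine's representation theorem (Theorem~\ref{theorem:psatz}) to the Archimedean preprime generated by the constraints and to the module $M$ consisting of finite sums $\sum_h s_h h$ with $s_h$ SAGE and $h \in R_q(C)$, plus $I$; Archimedeanity comes precisely from the witnesses $\by_j$ and $U - \by_j$, and the module property uses that SAGE functions are closed under addition and under multiplication by posynomials. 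Membership ${\bc^{(f)}}'\by \in M$ then \emph{directly} yields SAGE multipliers of the required form once the ideal term vanishes under the back-substitution. If you want to repair your route, you would need a representation in which the multipliers are posynomials or SAGE functions from the outset (a Krivine/Handelman-type module argument) rather than sums of squares to be "positivized" afterwards.
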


By proceeding with a line of reasoning as in the unconstrained case, one can check that the sequence of lower bounds $f_\sage^{(p,q)}$ converges to the global optimum of a constrained SP with rational exponents and a compact constraint set.  Note that Theorem~\ref{theorem:con} also implies that the methods described in Section~\ref{subsec:constrained} can be employed to find a minimizer of a signomial over a constraint set of the form $\{\bx \in \R^n ~|~ U \geq \exp\{{\balpha^{(j)}}'\bx\} \geq L, ~j=1,\dots,\ell \}$ for $U,L \in \R_{++}$ (or for unconstrained SPs when bounds on the location of a global minimizer are known).

\paragraph{Remarks} The proof of Theorem~\ref{theorem:unc} uses the fact that the set of posynomials forms a preprime (i.e., closed under addition and multiplication) and that posynomials are SAGE functions.  The proof of Theorem~\ref{theorem:con} appeals to the property that the set of SAGE functions is closed under multiplication by posynomials (formally, that the set of SAGE functions forms a module over the preprime of posynomials).  Although these results provide theoretical support for the methods proposed in Section~\ref{sec:hierarchy}, our proof techniques do not reflect the full strength of relative entropy relaxations.  For example, one of the appealing features of relative entropy relaxations for SPs -- not revealed by the proofs of Theorems~\ref{theorem:unc} and \ref{theorem:con} -- is that they are robust to small perturbations of the exponents of the underlying SP.  We discuss these points in Section~\ref{subsec:polyopt}; see Proposition~\ref{prop:robustness} and Example~\ref{example:robust}.

\subsection{Contrast with Polynomial Optimization} \label{subsec:polyopt}

A natural point of comparison with the framework presented in this paper is the literature on polynomial optimization, which involves the minimization of a multivariate polynomial subject to constraints specified by multivariate polynomials.  As with SPs, polynomial optimization problems are also non-convex in general, and they include families of NP-hard problems.  Parrilo \cite{Par2000,Par2003} and Lasserre \cite{Las2001} describe computationally feasible methods to obtain lower bounds for polynomial optimization problems.  These techniques rely on nonnegativity certificates for polynomials based on sum-of-squares decompositions \cite{Kri1964,Mar2008,Ste1974}, and the observation by Shor \cite{Sho1987} that checking if a polynomial is a sum-of-squares can be recast as an SDP feasibility problem.

SPs with rational exponents can be transformed to polynomial optimization problems over the nonnegative orthant by clearing denominators in the exponents.  This transformation generally leads to polynomials of very large degrees, thus making sum-of-squares techniques ill-suited for general SPs.  Example~\ref{example:basicage} gives a concrete illustration of this point.  The signomial $f(\bx) = \exp\{\bx_1\} + \exp\{\bx_2\} - \exp\{(\tfrac{1}{2}+\epsilon) \bx_1 +(\tfrac{1}{2}-\epsilon) \bx_2 \}$ for a fixed $\epsilon \in [-\tfrac{1}{2},\tfrac{1}{2}]$ can be efficiently certified as being globally nonnegative by solving a suitable relative entropy feasibility problem.  Now suppose that $\epsilon = \tfrac{p}{q}$ is a rational number in $[-\tfrac{1}{2},\tfrac{1}{2}]$.  One can then transform the question of nonnegativity of $f(\bx)$ to a problem of polynomial nonnegativity.  In particular by setting $\bz_1 = \exp\{\bx_1\}, \bz_2 = \exp\{\bx_2\}, \bz_3 = \exp\{(\tfrac{1}{2}+\tfrac{p}{q}) \bx_1 +(\tfrac{1}{2}-\tfrac{p}{q}) \bx_2 \}$, we would like to certify the nonnegativity of the polynomial $\tilde{f}(\bz_1,\bz_2,\bz_3) = \bz_1 + \bz_2 - \bz_3$ over the nonnegative orthant $\R^3_+$ modulo the ideal generated by the polynomial $\bz_3^{2q} - \bz_1^{q+2p} \bz_2^{q-2p}$.  If $q$ is large, the corresponding certificates based on sum-of-squares methods can be of very large degree (see \cite{Las2001,Par2000,Par2003} for more details about constructing these certificates), which in turn require solution of large SDPs.  On the other hand, there exists a short certificate for the nonnegativity of $f(\bx)$ via the AM/GM inequality, which is the basic insight we exploit to develop tractable convex relaxations for SPs.

More broadly, relative entropy relaxations for SPs also have the virtue that the bounds they provide are generally robust to small perturbations of the exponents.  Specifically, given a set of exponents $\{\balpha^{(j)}\}_{j=1}^\ell \subset \R^n$, we say that a SAGE cone $\mathcal{C}_\sage\left(\balpha^{(1)}, \dots, \balpha^{(\ell)} \right)$ is robust to small perturbations of the exponents $\{\balpha^{(j)}\}_{j=1}^\ell$ when the following condition holds: if a coefficient vector $\bc \in \R^\ell$ belongs to the interior of the cone $\mathcal{C}_\sage\left(\balpha^{(1)}, \dots, \balpha^{(\ell)} \right)$ for a collection of exponents $\{\balpha^{(j)}\}_{j=1}^\ell$, then $\bc$ also belongs to the cone $\mathcal{C}_\sage\left(\tilde{\balpha}^{(1)}, \allowbreak \dots, \allowbreak \tilde{\balpha}^{(\ell)} \right)$ for all sufficiently small perturbations $\tilde{\balpha}^{(j)}$ of each $\balpha^{(j)}$.  As the SAGE cone $\mathcal{C}_\sage$ is a direct sum of AM/GM exponential cones $\mathcal{C}_\age$, robustness of each of the component AM/GM exponential cones $\mathcal{C}_\age$ implies the robustness of $\mathcal{C}_\sage$.  The following proposition shows that cones of AM/GM exponentials are robust to small perturbations of the exponents for a broad class of exponents.  The basic observation underlying this result is that the exponents $\{\balpha^{(j)}\}_{j=1}^\ell$ appear only as linear constraints in the characterization of the cones $\mathcal{C}_\sage$ and $\mathcal{C}_\age$ based on \eqref{eq:cage} and Proposition~\ref{prop:sagecone}.

\begin{proposition} \label{prop:robustness}
Fix a set of exponents $\{\balpha^{(j)}\}_{j=0}^{\ell} \subset \R^n$, and let $A$ denote the $\R^{(\ell+1) \times n}$ matrix with the $\balpha^{(j)}$'s as the columns.  Let $(\bc, \beta) \in \R^{\ell}_+ \times \R$ be any point in the interior of the cone $\mathcal{C}_\age(\balpha^{(1)},\dots,\balpha^{(\ell)}; \balpha^{(0)})$.  Suppose $\balpha^{(0)}$ is not contained in the convex hull of any subset of $k \leq n$ of the exponents $\{\balpha^{(j)}\}_{j=1}^{\ell}$.  Then there exists an open set $\mathcal{S} \subset \R^{(\ell+1) \times n}$ containing $A$ with the following property: For any $\tilde{A} \in \mathcal{S}$ with columns $\{\tilde{\balpha}^{(j)}\}_{j=0}^{\ell}$, we have that $(\bc, \beta)$ belongs to $\mathcal{C}_\age(\tilde{\balpha}^{(1)},\dots,\tilde{\balpha}^{(\ell)}; \tilde{\balpha}^{(0)})$.
\end{proposition}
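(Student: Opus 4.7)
The plan is to exploit the observation (noted just before the proposition) that the exponents $\{\balpha^{(j)}\}_{j=0}^{\ell}$ enter the characterization \eqref{eq:cage} of $\mathcal{C}_\age$ only through a single linear equation $\sum_{j=1}^{\ell} \balpha^{(j)} \bnu_j = (\ones' \bnu) \balpha^{(0)}$ on the dual variable $\bnu \in \R^\ell_+$. The strategy is first to produce a \emph{strictly positive} certificate $\bnu^{**} \in \R^\ell_{++}$ satisfying the linear equation together with the strict inequality $D(\bnu^{**}, e\bc) < \beta$ for the original exponents, and then to perturb it to a nearby $\tilde{\bnu}$ that certifies membership of $(\bc,\beta)$ in $\mathcal{C}_\age(\tilde{\balpha}^{(1)},\dots,\tilde{\balpha}^{(\ell)}; \tilde{\balpha}^{(0)})$ for every $\tilde{A}$ in a small neighborhood of $A$. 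The case $\beta \geq 0$ is trivial since $\tilde{\bnu} = \bzero$ then certifies membership for every $\tilde{A}$, so I may assume $\beta < 0$.

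The first main step is to show, using Carath\'eodory's theorem together with the hypothesis, that (i) $\mathrm{conv}(\{\balpha^{(j)}\}_{j=1}^{\ell})$ is full-dimensional, (ii) $\balpha^{(0)}$ lies in its \emph{topological interior}, and (iii) the matrix $B := [\balpha^{(1)} - \balpha^{(0)}, \dots, \balpha^{(\ell)} - \balpha^{(0)}] \in \R^{n \times \ell}$ has row rank $n$. If any of (i)--(iii) failed, Carath\'eodory applied in the corresponding lower-dimensional affine hull or proper face would produce a convex decomposition of $\balpha^{(0)}$ using at most $n$ of the $\balpha^{(j)}$'s, contradicting the hypothesis. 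From (ii) I would build a full-support solution $\bnu^{\mathrm{full}} \in \R^\ell_{++}$ of the linear equation: for each fixed $j$, the point $(1+\epsilon)\balpha^{(0)} - \epsilon \balpha^{(j)}$ still lies in $\mathrm{conv}(\{\balpha^{(i)}\}_{i=1}^\ell)$ for small $\epsilon > 0$, and rearranging its convex decomposition produces a non-negative decomposition of $\balpha^{(0)}$ with strictly positive weight on index $j$; summing these decompositions over $j$ yields $\bnu^{\mathrm{full}}$. Since $(\bc,\beta)$ is in the interior of the cone, there is also a certificate $\bnu^* \in \R^\ell_+$ with $D(\bnu^*,e\bc) < \beta$; the convex combination $\bnu^{**} := (1-\epsilon)\bnu^* + \epsilon\, \bnu^{\mathrm{full}}$ then still satisfies the linear equation, lies in $\R^\ell_{++}$, and by joint convexity of $D$ satisfies $D(\bnu^{**},e\bc) < \beta$ for all sufficiently small $\epsilon > 0$.

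For the perturbation step, I would use (iii) to pick $n$ columns of $B$ that are linearly independent, grouping them into $B_1 \in \R^{n \times n}$ and the remaining $\ell-n$ columns into $B_2$; partition $\bnu$ conformally as $(\bnu_1, \bnu_2)$. The equation $B\bnu = 0$ becomes $B_1 \bnu_1 = -B_2 \bnu_2$. For $\tilde{A}$ in a small enough open neighborhood $\mathcal{S}$ of $A$, the corresponding $\tilde{B}_1$ remains invertible; setting $\tilde{\bnu}_2 = \bnu^{**}_2$ and $\tilde{\bnu}_1 = -\tilde{B}_1^{-1} \tilde{B}_2 \bnu^{**}_2$ produces a $\tilde{\bnu}$ satisfying the perturbed linear equation and depending continuously on $\tilde{A}$, with $\tilde{\bnu} \to \bnu^{**}$ as $\tilde{A} \to A$. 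Since $\bnu^{**} \in \R^\ell_{++}$, shrinking $\mathcal{S}$ keeps $\tilde{\bnu}$ in the open positive orthant, and continuity of $D$ in its first argument then yields $D(\tilde{\bnu}, e\bc) < \beta$ throughout $\mathcal{S}$, giving $(\bc,\beta) \in \mathcal{C}_\age(\tilde{\balpha}^{(1)},\dots,\tilde{\balpha}^{(\ell)}; \tilde{\balpha}^{(0)})$.

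The main obstacle is the construction of the strictly positive certificate $\bnu^{**}$. An arbitrary certificate $\bnu^*$ for $(\bc,\beta)$ may well have zero components, and projecting such a $\bnu^*$ onto the kernel of a perturbed $\tilde{B}$ can leave those components slightly negative, violating non-negativity. The hypothesis on $\balpha^{(0)}$ is calibrated precisely to upgrade ``no convex decomposition of $\balpha^{(0)}$ of size $\leq n$'' into the stronger geometric statement ``$\balpha^{(0)}$ lies in the topological interior of the full-dimensional convex hull of $\{\balpha^{(j)}\}_{j=1}^\ell$,'' and it is this upgrade that enables the production of the full-support $\bnu^{\mathrm{full}}$; once such a certificate is in hand, the rest of the perturbation argument reduces to an elementary application of Cramer's rule to an invertible submatrix.
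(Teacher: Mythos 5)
Your proposal is correct, and it follows the same overall strategy as the paper -- observe that the exponents enter $\mathcal{C}_\age$ only through the linear constraint on $\bnu$, extract a certificate with strict inequality $D(\bnu, e\bc) < \beta$ from interiority, perturb the certificate as the exponents move, and close with continuity of $D(\cdot, e\bc)$ for fixed $\bc \in \R^\ell_{++}$ -- but it resolves the key difficulty (nonnegativity of the perturbed certificate) differently. The paper keeps the possibly sparse certificate $\bnu^*$, uses the hypothesis on $\balpha^{(0)}$ only to conclude that $\bnu^*$ has at least $n+1$ nonzero entries, and perturbs while holding the zero coordinates at zero; for that step to go through one also needs (implicitly) that the columns $\balpha^{(j)} - \balpha^{(0)}$ indexed by the support of $\bnu^*$ have full row rank $n$, which follows from the same Carath\'eodory argument you use. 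Your route instead upgrades the hypothesis to the geometric statement that $\balpha^{(0)}$ lies in the topological interior of the full-dimensional hull, manufactures a strictly positive certificate $\bnu^{**}$ by averaging with the full-support decomposition and invoking convexity of $D(\cdot, e\bc)$, and then the perturbation is a clean invertible-submatrix/Cramer argument with no support bookkeeping at all. What your version buys is an explicit treatment of exactly the point the paper's parenthetical glosses over; what the paper's version buys is brevity, since it never needs the full-support construction. One small presentational point: the Carath\'eodory deductions (i)--(iii) require knowing that $\balpha^{(0)} \in \mathrm{conv}(\{\balpha^{(j)}\}_{j=1}^\ell)$ in the first place; in the nontrivial case $\beta < 0$ this comes from the nonzero certificate $\bnu^*$, so you should introduce $\bnu^*$ (or at least the membership of $\balpha^{(0)}$ in the hull) before stating (i)--(iii) rather than after.
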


\paragraph{Proof sketch} We provide a brief outline of the proof.  As $(\bc,\beta)$ belongs to the interior of the cone $\mathcal{C}_\age(\balpha^{(1)},\dots,\balpha^{(\ell)}; \balpha^{(0)})$, the entries of $\bc$ must be strictly positive.  Further, we have from Lemma~\ref{lemma:agerelent} that there exists a $\bnu \in \R^\ell_+$ such that $D(\bnu, e\bc) < \beta$ and that $\sum_{j=1}^\ell \balpha^{(j)} \bnu_j = (\ones' \bnu) \balpha^{(0)}$. Due to the assumption that $\balpha^{(0)}$ is not contained in the convex hull of any subset of $k \leq n$ of the exponents $\{\balpha^{(j)}\}_{j=1}^{\ell}$, we have that $\bnu$ must contain at least $n+1$ nonzeros.  Therefore, for all sufficiently small perturbations $\{\tilde{\balpha}^{(j)}\}_{j=0}^{\ell}$, there exists a corresponding $\tilde{\bnu}$ close to $\bnu$ such that $\sum_{j=1}^\ell (\tilde{\balpha}^{(j)} - \tilde{\balpha}^{(0)}) \tilde{\bnu}_j = 0$.  (If some of the coordinates of the original $\bnu$ are zero, then there exists a nearby $\tilde{\bnu}$ with zeros in the same coordinates satisfying $\sum_{j=1}^\ell (\tilde{\balpha}^{(j)} - \tilde{\balpha}^{(0)}) \tilde{\bnu}_j = 0$ for all sufficiently small perturbations $\{\tilde{\balpha}^{(j)}\}_{j=0}^{\ell}$.)  The relative entropy function $D(\cdot,e\bc)$ is continuous with respect to the first argument if the second argument $\bc$ is fixed and contains strictly positive entries.  Consequently, we have that $D(\tilde{\bnu},e \bc) < \beta$. $\endproof$

This result demonstrates that the bounds $f_\sage^{(0)}$ \eqref{eq:sagelowerboundp} and $f_\sage^{(0,1)}$ \eqref{eq:sagelowerboundpq} based on SAGE decompositions are robust to small perturbations of the exponents in the underlying SPs; one can also carry out a similar analysis to show that the bounds $f_\sage^{(p)}$ \eqref{eq:sagelowerboundp} and $f_\sage^{(p,q)}$ \eqref{eq:sagelowerboundpq} corresponding to higher levels of the hierarchies are robust to small perturbations of the exponents in the underlying SPs.

\begin{example} \label{example:robust}
Consider the following signomial obtained by perturbing the exponents of the signomial \eqref{eq:sig1} in Example~\ref{example:unc} (but leaving the coefficients unchanged):
\small
\begin{eqnarray*}
f(\bx) &=& 10 \exp\{10.2070 \bx_1 + 0.0082 \bx_2 -0.0039 \bx_3\} \\ && + 10 \exp\{-0.0081 \bx_1 + 9.8024 \bx_2 -0.0097 \bx_3\} \\ && + 10 \exp\{0.0070 \bx_1 - 0.0156 \bx_2 + 8.1923 \bx_3\} \\ && - 14.6794 \exp\{1.5296\bx_1 +  1.0927 \bx_2 + 1.3441 \bx_3\} \\ && - 7.8601 \exp\{1.0750\bx_1 +  1.9108 \bx_2 + 1.6339 \bx_3\} \\ && + 8.7838 \exp\{1.0513 \bx_1 + 0.0571 \bx_2 + 1.6188 \bx_3\}.
\end{eqnarray*}
\normalsize
The SAGE lower bound is tight for the perturbed signomial specified here, with the optimal value being equal to $-0.9458$, and the optimal solution being $\bx^\star = (-0.3016, \allowbreak -0.2605, \allowbreak -0.4013)'$.  Recall that the SAGE lower bound was also tight for the signomial \eqref{eq:sig1} of Example~\ref{example:unc}, with the optimal value being equal to $-0.9747$ and the optimal solution being $\bx^\star = (-0.3020, \allowbreak -0.2586, -0.4010)'$.  Hence, this example provides numerical evidence for the robustness of our relative entropy relaxation methods with respect to small perturbations of the exponents.
\end{example}

Approaching this SP as a polynomial optimization problem clearly illustrates the shortcomings of that viewpoint, as small changes in the exponents can lead to very different polynomials after clearing denominators in the exponents.  In turn, the quality of the bounds and amount of computation required to obtain them via SDP relaxations based on sum-of-squares techniques can vary dramatically for small changes in the exponents.  However, relative entropy relaxations for SPs based on SAGE decompositions are well-behaved under such small perturbations of the exponents.

\section{Discussion} \label{sec:discussion}

A number of directions for further investigation arise from this work, and we mention some of these here.

\paragraph{Implications for polynomial optimization} 
The methods developed in this paper may offer a useful addition to SDP relaxation frameworks based on sum-of-squares decompositions for polynomial optimization problems over the nonnegative orthant (equivalently, polynomial optimization problems involving even polynomials).  For example, the Motzkin polynomial $p(\by_1, \by_2, \by_3) = \by_1^2 \by_2^4 + \by_1^4 \by_2^2 + \by_3^6 - 3 \by_1^2 \by_2^2 \by_3^2$ is certified as being globally nonnegative via the AM/GM inequality -- equivalently, $\tilde{p}(\bx_1,\bx_2,\bx_3) = p(\exp\{\bx_1\}, \exp\{\bx_2\}, \exp\{\bx_3\})$ is an AM/GM-exponential -- although this polynomial cannot directly be expressed as a sum-of-squares.  More generally, the methods in this paper may offer effective certificates of global nonnegativity for certain ``sparse'' polynomials of large degree, which are sometimes referred to as ``fewnomials'' in the literature \cite{Kho1991}; this point is also exploited in \cite{GhaM2012}, where the authors employ GPs to compute lower bounds on certain multivariate polynomials more efficiently than SDPs based on sum-of-squares techniques. In fact one can combine the SDP and the relative entropy relaxation frameworks, based on sum-of-squares and SAGE decompositions respectively, to obtain alternative hierarchies for polynomial optimization problems.  Such a combined hierarchy would provide bounds that are at least as good as those produced by SDP relaxations, although this improvement comes at an increased computational cost; it is of interest to investigate and quantify these comparisons.

\paragraph{Efficient transcendental representations of semialgebraic sets} Consider the set $S_d = \{(a,b) \in \R^2 ~|~ x^{2d} + ax^2 + b \geq 0 ~ \forall x \in \R\}$ for $d \in \mathbb{Z}_{++}$.  This set is convex and semialgebraic for each $d \in \mathbb{Z}_{++}$, and it is SDP representable (i.e., expressible as the projection of a slice of the cone of $w_d \times w_d$ positive semidefinite matrices, where $w_d \in \mathbb{Z}_{++}$ may depend on $d$) based on the observation that a nonnegative univariate polynomial can be expressed as a sum-of-squares.  However, these sum-of-squares decompositions consist of many terms for large $d$; thus, we do not know of SDP descriptions of $S_d$ in which $w_d$ does not grow with $d$.  On the other hand, $S_d$ can be represented efficiently via relative entropy inequalities as $S_d = \{(a,b) \in \R \times \R_+ ~|~ \exists \bnu \in \R^2_+ ~\mathrm{s.t.}~ D(\bnu, e (1,b)')\leq a, ~ (d-1)\bnu_1 = \bnu_2 \}$.  Note that the size of this description does not grow with $d$. Consequently, there may be semialgebraic sets (indeed, SDP representable sets) that are more efficiently specified via transcendental representations.  Underlying this discussion is the insight that the AM/GM inequality offers a different proof system than sum-of-squares methods for certifying nonnegativity.  Specifically, the length of a sum-of-squares proof of the AM/GM inequality for a \emph{fixed} set of rational weights can be very large depending on the denominators of the rational weights.  On the other hand, relative entropy methods offer an effective approach for \emph{computing} a set of weights that certifies nonnegativity via the AM/GM inequality.  Investigating the respective power of these different proof systems may yield new insights into the possibility of efficient transcendental representations of convex semialgebraic sets.

\paragraph{Effective bounds on quality of approximation} A third direction is to provide useful bounds on the size of a relaxation required in order to achieve a specified quality of approximation to the global minimum.  An important step towards addressing this question is to achieve a deeper understanding of the effect of perturbations in the exponents of an SP on the eventual bounds computed at a certain level of our hierarchy.  As discussed in Section~\ref{subsec:completeness}, the techniques employed in our proofs of Theorems~\ref{theorem:unc} and \ref{theorem:con} do not provide guidance on the effects of such perturbations, and consequently a different set of ideas may be required to address this question.

\section*{Acknowledgements}
We wish to thank Bill Helton, Pablo Parrilo, and Bernd Sturmfels for helpful conversations at various stages of this project.  This work was supported in part by the following grants: National Science Foundation Career award CCF-1350590 and Air Force Office of Scientific Research grant FA9550-14-1-0098.

\appendix
\section{Appendix} We begin by briefly stating the relevant representation theorems from the real algebraic geometry literature -- the one we use is due to Krivine \cite{Kri1964}; see \cite{Put1993,Sch1991,Ste1974} for other prominent examples.  Then we describe a transformation of SPs with rational exponent vectors to polynomial optimization problems.  Finally, we put these steps together to give proofs of Theorems~\ref{theorem:unc} and \ref{theorem:con}.

\subsection{Representation Theorems for Archimedean Modules over Preprimes} \label{app:rep} We primarily draw from the wonderful exposition in \cite{Mar2008}.  Let $\R[\bY_1,\allowbreak\dots,\allowbreak\bY_\ell]$ denote the polynomial ring in indeterminates $\bY$ with real coefficients, and let $\Q$ denote the rational numbers.  The first object that plays a critical role in our development is a preprime in a polynomial ring:
\begin{definition} \label{defn:preprime}
A \emph{preprime} $P \subset \R[\bY_1,\dots,\bY_\ell]$ is a subset containing $\Q$ that is closed under addition and multiplication.  Further, a preprime $P$ is \emph{Archimedean} if for each $r \in \R[\bY_1,\dots,\bY_\ell]$ there exists $m \in \mathbb{Z}$ such that $r + m \in P$.
\end{definition}

For a preprime $P \subset \R[\bY_1,\dots,\bY_\ell]$, the \emph{ring of bounded elements of $P$ with respect to $\R[\bY_1,\dots,\allowbreak\bY_\ell]$} is defined as:
\begin{equation*}
H_P = \{r \in \R[\bY_1,\dots,\bY_\ell] ~|~ \exists m \in \mathbb{Z} ~\mathrm{s.t.}~ m \pm r \in P \}.
\end{equation*}
From \cite[Prop 5.1.3]{Mar2008}, the preprime $P$ is Archimedean if and only if $H_P = \R[\bY_1,\dots,\allowbreak\bY_\ell]$.  The next algebraic structure that is central to our discussion is a module over a preprime:
\begin{definition} \label{defn:module}
Let $P \subset \R[\bY_1,\dots,\bY_\ell]$ be a preprime.  Then $M \subset \R[\bY_1,\allowbreak\dots,\allowbreak\bY_\ell]$ is a \emph{$P$-module} if it is closed under addition, is closed under multiplication by an element of $P$, and contains $1$.  As with preprimes, a $P$-module $M$ is \emph{Archimedean} if for each $r \in \R[\bY_1,\dots,\bY_\ell]$ there exists $m \in \mathbb{Z}$ such that $r + m \in M$.
\end{definition}

Note that $1 \in M$ for a $P$-module $M$ implies that $P \subset M$.  Consequently, if a preprime $P$ is Archimedean, then any $P$-module $M$ is also Archimedean.  Finally, $P$ itself is a $P$-module.  The following result due, in various forms, to several authors is the main foundation for our proofs of Theorems~\ref{theorem:unc} and \ref{theorem:con}:
\begin{theorem} \cite{Kri1964,Mar2008} \label{theorem:psatz}
Let $P \subset \R[\bY_1,\dots,\bY_\ell]$ be an Archimedean preprime and let $M$ be a $P$-module.  Let $\mathcal{K}_M$ denote the set of points in $\R^\ell$ on which every element of $M$ is nonnegative:
\begin{equation*}
\mathcal{K}_M = \{\by \in \R^\ell ~|~ r(\by) \geq 0 ~ \mathrm{for~all}~ r \in M \}.
\end{equation*}
If $s \in \R[\bY_1,\dots,\bY_\ell]$ is strictly positive on the set of points $\mathcal{K}_M$, then $s \in M$:
\begin{equation*}
s(\by) > 0 ~ \mathrm{for~all}~ \by \in \mathcal{K}_M \Rightarrow s \in M.
\end{equation*}
\end{theorem}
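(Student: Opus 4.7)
The plan is to proceed by contradiction: assume $s \notin M$ and produce a point $y \in \mathcal{K}_M$ with $s(y) \leq 0$, contradicting strict positivity of $s$ on $\mathcal{K}_M$.

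First, I would apply Zorn's lemma to the collection of $P$-modules $N$ satisfying $M \subseteq N$ and $s \notin N$, ordered by inclusion. The union of a chain of such modules is again a $P$-module (the axioms—containing $1$, closure under addition, and closure under multiplication by elements of $P$—involve only finite data) containing $M$ and avoiding $s$, so a maximal element $N^\star$ exists. By maximality, for any $r \in \R[\bY_1,\dots,\bY_\ell]$ with $r \notin N^\star$, the smallest $P$-module containing $N^\star \cup \{r\}$ contains $s$, yielding an expression $s = n + p\,r$ with $n \in N^\star$ and $p \in P$. Using such expressions for both $r$ and $-r$ whenever neither lies in $N^\star$, I would show that $I := \{r : r \in N^\star \text{ and } -r \in N^\star\}$ is a prime ideal of $\R[\bY_1,\dots,\bY_\ell]$, and that on the quotient ring $A := \R[\bY_1,\dots,\bY_\ell]/I$ the relation $[r] \geq 0 \iff r \in N^\star$ defines a total order compatible with both addition and multiplication.

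Next, I would invoke Archimedeanity. Since $M$ is Archimedean and $M \subseteq N^\star$, the module $N^\star$ is also Archimedean, so for every $r$ there exists $m \in \mathbb{Z}$ with $m \pm r \in N^\star$, meaning $[r]$ is absolutely bounded in $A$ by an integer. Hence the totally ordered integral domain $A$ is Archimedean-ordered, and by H\"{o}lder's theorem it admits an order-preserving ring embedding $\psi : A \hookrightarrow \R$. Composing with the quotient map yields a ring homomorphism $\phi : \R[\bY_1,\dots,\bY_\ell] \to \R$ with $\phi(N^\star) \subseteq [0,\infty)$, and, because $s \notin N^\star$ (so $[s] < 0$ in the total order on $A$), we get $\phi(s) \leq 0$. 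Setting $y := (\phi(\bY_1),\dots,\phi(\bY_\ell)) \in \R^\ell$, the homomorphism property gives $r(y) = \phi(r)$ for every $r$; every element of $M \subseteq N^\star$ is therefore nonnegative at $y$, so $y \in \mathcal{K}_M$, while $s(y) = \phi(s) \leq 0$, the desired contradiction.

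The main obstacle, by a wide margin, is the algebraic step: extracting from the maximality of $N^\star$ the fact that $I$ is a prime ideal and that the putative order on $A$ is well-defined, total, and compatible with products. Multiplicative compatibility in particular requires carefully combining the two representations $s = n_1 + p_1 r$ and $s = n_2 - p_2 r$ produced by maximality when $r, -r \notin N^\star$, then rescaling and using Archimedeanity of $P$ to absorb rational coefficients that arise. Once this algebraic backbone is in place, the passage via H\"{o}lder's theorem and evaluation at $y$ is essentially routine.
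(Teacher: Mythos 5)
A preliminary remark: the paper does not prove Theorem~\ref{theorem:psatz} at all --- it is quoted from the literature \cite{Kri1964,Mar2008} and used as a black box in the proofs of Theorems~\ref{theorem:unc} and \ref{theorem:con}. So the relevant comparison is with the classical proof, and your outline is indeed the classical Kadison--Dubois/Krivine argument: Zorn's lemma to obtain a $P$-module $N^\star \supseteq M$ maximal with $s \notin N^\star$, passage to the quotient by the support $I = N^\star \cap (-N^\star)$, Archimedeanity plus H\"older's theorem to produce a unital ring homomorphism $\phi:\R[\bY_1,\dots,\bY_\ell] \to \R$ that is nonnegative on $N^\star$ and satisfies $\phi(s) \leq 0$, and evaluation at $y = (\phi(\bY_1),\dots,\phi(\bY_\ell)) \in \mathcal{K}_M$ to contradict strict positivity. (One step you use silently is that $\phi$ is evaluation at a point, i.e., that $\phi$ fixes the real constants; this is fine, since any unital ring homomorphism from $\R$ to $\R$ is the identity.)

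As a proof, however, the proposal has a genuine gap, located exactly where you flag it. From maximality you get, when $r \notin N^\star$ and $-r \notin N^\star$, representations $s = n_1 + p_1 r$ and $s = n_2 - p_2 r$; cross-multiplying by $p_2$ and $p_1$ and adding yields only $(p_1+p_2)\, s \in N^\star$, and the module axioms give you no way to ``divide'' by $p_1+p_2$. Concluding from this that $s \in N^\star$ (hence totality of the order, $N^\star \cup -N^\star = \R[\bY_1,\dots,\bY_\ell]$) is a nontrivial further argument that genuinely uses Archimedeanity --- e.g., choosing an integer $k$ with $k - (p_1+p_2) \in P$ and iterating, or, as in Marshall's treatment, replacing the order-theoretic route by the functional $\phi(b) = \inf\{q \in \Q \,:\, q - b \in N^\star\}$ and using maximality to prove it is additive and multiplicative. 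Similarly, compatibility of the quotient order with multiplication does not follow from the module axioms: $N^\star$ is closed under multiplication by $P$, not under products of its own elements, so the assertion that $[r] \geq 0 \Leftrightarrow r \in N^\star$ defines a total order compatible with products is essentially the content of the theorem rather than a routine verification; primality of $I$ has the same status. Since these decisive steps are only announced, what you have is an accurate roadmap of the known proof, not yet a proof.
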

The converse direction, i.e., $s \in M$ implies $s(\by) \geq 0 ~ \mathrm{for~all}~ \by \in \mathcal{K}_M$, follows directly from the definition of $\mathcal{K}_M$.  Results of the form of Theorem~\ref{theorem:psatz} are known as representation theorems because the $P$-module $M$ is usually constructed explicitly, and the positivity of a polynomial $s$ on the nonnegativity set $\mathcal{K}_M$ associated to $M$ implies that $s$ has an explicit representation.

\subsection{Implicitization and Parametrization of SPs with Rational Exponents}
In this section, we present an elementary result that allows us to transform certain families of SPs with rational exponents to polynomial optimization problems over the nonnegative orthant.  This result is employed in our proofs of Theorems~\ref{theorem:unc} and \ref{theorem:con}.  Specifically, consider the problem of verifying positivity of $\inf_{\bx \in \mathcal{S}} f(\bx)$ for $\mathcal{S} = \{\bx \in \R^n ~|~ g_i(\bx) \geq 0, ~ i=1,\dots,m\}$, where $f(\bx),\{g_i(\bx)\}_{i=1}^m$ are signomials with respect to exponents $\{\balpha^{(j)}\}_{j=1}^\ell \subset \Q^n$.  Letting $f(\bx) = \sum_{j=1}^\ell {\bc_j^{(f)}}' \exp\{{\balpha^{(j)}}' \bx\}$ and letting each $g_i(\bx) = \sum_{j=1}^\ell {\bc_j^{(g_i)}}' \exp\{{\balpha^{(j)}}' \bx\}$, the positivity of $\inf_{\bx \in \mathcal{S}} f(\bx)$ with $\mathcal{S} = \{\bx \in \R^n ~|~ g_i(\bx) \geq 0, ~ i=1,\dots,m\}$ is equivalent to $\inf_{\by \in \mathcal{T}_{++}} {\bc^{(f)}}' \by > 0$ for
\begin{equation}
\begin{aligned}
\mathcal{T}_{++} = &\{\by \in \R^\ell ~|~ {\bc^{(g_i)}}' \by \geq 0, ~ i=1,\dots,m \} \\ &\cap \{\by \in \R^\ell_{++} ~|~ \exists \bx \in \R^n ~\mathrm{s.t.}~ \by_j = \exp\{{\balpha^{(j)}}' \bx\}, ~ j=1,\dots,\ell\}.
\end{aligned} \label{eq:beforepolyt}
\end{equation}

\subsubsection{Implicitization} For rational exponents $\{\balpha^{(j)}\}_{j=1}^\ell \in \Q^n$, the set $\{\by \in \R^\ell_{++} ~|~ \exists \bx \in \R^n ~\mathrm{s.t.}~ \by_j = \exp\{{\balpha^{(j)}}' \bx\}, ~ j=1,\dots,\ell\}$ can be implicitized via polynomial equations as follows.  Suppose without loss of generality that the first $n$ of the exponent vectors $\{\balpha^{(j)}\}_{j=1}^\ell$ are linearly independent.  Consequently, the remaining exponents $\{\balpha^{(j)}\}_{j=n+1}^\ell$ can be expressed as linear combinations with \emph{rational} coefficients of the first $n$ exponents.  As a result, each $\by_j$ for $j = n+1,\dots,\ell$ can be expressed as $\by_j = \prod_{k=1}^n \by_k^{\bgamma^{(j)}_k}$ for rational vectors $\bgamma^{(j)} \in \Q^n, ~ j=n+1,\dots,\ell$.  By moving terms involving negative powers from the right to the left side of each of these equations and then suitably clearing denominators in the exponents (valid manipulations over the positive orthant $\R^\ell_{++}$), the set $\mathcal{T}_{++}$ from \eqref{eq:beforepolyt} can be characterized as
\begin{equation}
\begin{aligned}
\mathcal{T}_{++} = &\{\by \in \R^\ell ~|~ {\bc^{(g_i)}}' \by \geq 0, ~ i=1,\dots,m \} \\ &\cap \{\by \in \R^\ell_{++} ~|~ v_j(\by) - w_j(\by) = 0, ~ j=n+1,\dots,\ell \}
\end{aligned} \label{eq:polyt}
\end{equation}
for monomials $v_j(\by), w_j(\by), j=n+1,\dots,\ell$.  The restriction to the positive orthant $\R^\ell_{++}$ in the second set here is crucial as the manipulations we describe are only valid over the positive orthant.

In our appeal to Theorem~\ref{theorem:psatz}, we consider the positivity of $\inf_{\by \in \mathcal{T}_+} {\bc^{(f)}}' \by$ for
\begin{equation}
\begin{aligned}
\mathcal{T}_+ = &\{\by \in \R^\ell ~|~ {\bc^{(g_i)}}' \by \geq 0, ~ i=1,\dots,m \} \\ &\cap \{\by \in \R^\ell_{+} ~|~ v_j(\by) - w_j(\by) = 0, ~ j=n+1,\dots,\ell \}.
\end{aligned} \label{eq:polytclosure}
\end{equation}
However, the implicitization operation is, in general, only valid over the positive orthant $\R^\ell_{++}$.  Consequently, we require that the closure of the set $\mathcal{T}_{++}$ is equal to $\mathcal{T}_+$, as this would imply that $\inf_{\by \in \mathcal{T}_+} {\bc^{(f)}}' \by = \inf_{\by \in \mathcal{T}_{++}} {\bc^{(f)}}' \by$.  The next two results give conditions under which $\mathcal{T}_+ = \mathrm{cl}(\mathcal{T}_{++})$; these correspond to the assumptions in Theorems~\ref{theorem:unc} and \ref{theorem:con}.

\begin{lemma} \label{lemma:conimplicit}
Fix a set of exponents $\{\balpha^{(j)}\}_{j=1}^\ell \in \Q^n$ with $\{\balpha^{(j)}\}_{j=1}^n$ being linearly independent, and consider an SP with transformed constraint sets $\mathcal{T}_{++}$ and $\mathcal{T}_+$ as defined in \eqref{eq:polyt}, \eqref{eq:polytclosure} for some set of coefficients $\bc^{(g_i)} \in \R^\ell, i=1,\dots,m$.  If $\{\by \in \R^\ell ~|~ {\bc^{(g_i)}}' \by \geq 0, ~ i=1,\dots,m \} \subset \R^\ell_{++}$, then $\mathrm{cl}(\mathcal{T}_{++}) = \mathcal{T}_+$.
\end{lemma}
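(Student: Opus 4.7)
My plan is to observe that the hypothesis is strong enough to make the statement essentially trivial: under the assumption, the two sets $\mathcal{T}_{++}$ and $\mathcal{T}_+$ actually coincide, and that common set is already closed, so its closure equals itself.

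Concretely, I would first argue the easy containment $\mathrm{cl}(\mathcal{T}_{++}) \subseteq \mathcal{T}_+$. The set $\mathcal{T}_+$ from \eqref{eq:polytclosure} is an intersection of closed half-spaces $\{\by ~|~ {\bc^{(g_i)}}' \by \geq 0\}$, the closed orthant $\R^\ell_+$, and the real-algebraic set $\{v_j(\by) - w_j(\by) = 0\}$, hence $\mathcal{T}_+$ is closed. Since replacing $\R^\ell_+$ by $\R^\ell_{++}$ in \eqref{eq:polyt} can only shrink the set, we have $\mathcal{T}_{++} \subseteq \mathcal{T}_+$, and taking closures gives the first inclusion.

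For the reverse inclusion, I would invoke the hypothesis directly. Let $\by \in \mathcal{T}_+$. Then $\by$ satisfies ${\bc^{(g_i)}}' \by \geq 0$ for all $i = 1,\dots,m$, and the assumption $\{\by \in \R^\ell ~|~ {\bc^{(g_i)}}' \by \geq 0, ~ i=1,\dots,m\} \subseteq \R^\ell_{++}$ immediately forces $\by \in \R^\ell_{++}$. Combined with the polynomial equalities $v_j(\by) - w_j(\by) = 0$ that $\by$ also satisfies, this shows $\by \in \mathcal{T}_{++}$. Hence $\mathcal{T}_+ \subseteq \mathcal{T}_{++} \subseteq \mathrm{cl}(\mathcal{T}_{++})$, and the two inclusions together yield $\mathrm{cl}(\mathcal{T}_{++}) = \mathcal{T}_+$.

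There is really no hard step here, since the hypothesis bypasses the usual issue of having to approximate boundary points of $\R^\ell_+$ from within $\R^\ell_{++}$ while preserving the monomial equalities; the main point is just to unpack the definitions of $\mathcal{T}_{++}$ and $\mathcal{T}_+$ and to verify that $\mathcal{T}_+$ is closed. The significance of the lemma lies not in its proof but in identifying a natural hypothesis under which the implicitization step of Section~A.2 is valid for the purposes of Theorem~\ref{theorem:con}, where the boxed compactness constraints $U \geq \exp\{{\balpha^{(j)}}'\bx\} \geq L$ translate into linear inequalities on $\by$ that manifestly force every coordinate into a positive interval.
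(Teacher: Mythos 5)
Your proposal is correct and follows essentially the same route as the paper's (very brief) proof: the hypothesis forces $\mathcal{T}_{++} = \mathcal{T}_+$, and since $\mathcal{T}_+$ is closed the claim follows; you have simply spelled out the "one can check" step explicitly.
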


\begin{proof}
One can check that $\mathcal{T}_{++} = \mathcal{T}_+$.  The result then follows from the observation that $\mathcal{T}_+$ is a closed set.
\end{proof}

\begin{lemma} \label{lemma:uncimplicit}
Fix a set of exponents $\{\balpha^{(j)}\}_{j=1}^\ell \in \Q^n$ with $\{\balpha^{(j)}\}_{j=1}^n$ being linearly independent.  If the remaining vectors $\{\balpha^{(j)}\}_{j=n+1}^\ell$ lie in the cone generated by $\{\balpha^{(j)}\}_{j=1}^n$, then $\mathrm{cl}(\mathcal{T}_{++}) = \mathcal{T}_+$, where these sets are defined in \eqref{eq:polyt}, \eqref{eq:polytclosure}.
\end{lemma}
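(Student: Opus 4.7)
The plan is to establish the two inclusions $\mathrm{cl}(\mathcal{T}_{++}) \subseteq \mathcal{T}_+$ and $\mathcal{T}_+ \subseteq \mathrm{cl}(\mathcal{T}_{++})$ separately. The former is automatic: $\mathcal{T}_{++} \subseteq \mathcal{T}_+$ by construction, and $\mathcal{T}_+$ is closed because it is a finite intersection of linear half-spaces with the zero level set of the continuous polynomials $v_j - w_j$ inside the closed orthant $\R^\ell_+$. The substantive part is the reverse inclusion, which amounts to showing that points of the monomial variety lying on the boundary of $\R^\ell_+$ can be approximated by strictly positive points of the same variety.

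The key idea is to use the cone hypothesis to parametrize the monomial variety continuously over the \emph{closed} orthant by its first $n$ coordinates. Since $\{\balpha^{(j)}\}_{j=1}^n$ are linearly independent and each remaining $\balpha^{(j)}$ lies in the rational cone they generate, one can write $\balpha^{(j)} = \sum_{k=1}^n \bgamma^{(j)}_k \balpha^{(k)}$ with $\bgamma^{(j)}_k \in \Q_+$, and clearing a common denominator gives the integer form $\by_j^{q_j} - \prod_k \by_k^{p_{j,k}} = 0$ of the monomial equation. On $\R^\ell_{++}$ this equation is equivalent to $\by_j = \prod_k \by_k^{\bgamma^{(j)}_k}$, and because every exponent $\bgamma^{(j)}_k$ is \emph{nonnegative} (and with the convention $0^0 = 1$), the right-hand side extends continuously to $\by \in \R^\ell_+$. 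This yields a continuous parametrization $\Phi: \R^n_+ \to \R^\ell_+$ defined by $\Phi(\bs)_k = \bs_k$ for $k \leq n$ and $\Phi(\bs)_j = \prod_{k=1}^n \bs_k^{\bgamma^{(j)}_k}$ for $j \geq n+1$, and one checks that $\Phi(\R^n_+)$ coincides with the monomial variety in $\R^\ell_+$: any $\by^0$ satisfying the monomial equations is recovered as $\Phi(\by^0_1, \ldots, \by^0_n)$ by taking nonnegative $q_j$-th roots of $(\by^0_j)^{q_j} = \prod_k (\by^0_k)^{p_{j,k}}$.

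With this parametrization in hand, the approximation becomes immediate. Given $\by^0 \in \mathcal{T}_+$, set $\bs^0 = (\by^0_1, \ldots, \by^0_n) \in \R^n_+$ so that $\Phi(\bs^0) = \by^0$, and define the strictly positive perturbations $\bs^{(N)} = \bs^0 + (1/N) \bones \in \R^n_{++}$. Then $\by^{(N)} := \Phi(\bs^{(N)}) \in \R^\ell_{++}$ automatically satisfies the monomial equations, and in the unconstrained setting of Theorem~\ref{theorem:unc} relevant here (where the $\bc^{(g_i)}$-constraints are absent) this places $\by^{(N)}$ in $\mathcal{T}_{++}$. Continuity of $\Phi$ at $\bs^0$ gives $\by^{(N)} \to \by^0$, which completes the proof.

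The main obstacle in this argument is the continuous extension of $\Phi$ from $\R^n_{++}$ to its closure, and this is exactly what the cone hypothesis supplies: nonnegativity of every exponent $\bgamma^{(j)}_k$ keeps the monomial $\bs_k^{\bgamma^{(j)}_k}$ bounded and continuous at $\bs_k = 0$. Without this assumption, a negative exponent would cause the parametrization to blow up near the coordinate hyperplanes and could allow boundary points of the monomial variety to be isolated from $\mathcal{T}_{++}$, so the closure identity would fail. This observation also indicates why the companion result Lemma~\ref{lemma:conimplicit} instead requires the feasible region to be forced into $\R^\ell_{++}$ a priori, since that hypothesis sidesteps the boundary issue altogether.
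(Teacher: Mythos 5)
Your proof is correct and takes essentially the same route as the paper's: both hinge on the observation that the cone hypothesis (plus linear independence of $\{\balpha^{(j)}\}_{j=1}^n$) forces each implicitized equation into the form $\by_j^{q_j} = \prod_{k\le n}\by_k^{p_{j,k}}$ with nonnegative integer exponents, and then approximate boundary points of the variety by strictly positive points on it, with your explicit parametrization $\Phi$ simply filling in the step the paper dismisses as ``one can check.'' Your restriction to the setting without the ${\bc^{(g_i)}}$ constraints also matches how the paper itself reads the lemma in its own proof, since it is invoked only for the unconstrained Theorem~\ref{theorem:unc}.
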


\begin{proof}
As $\{\balpha^{(j)}\}_{j=n+1}^\ell$ lie in the cone generated by $\{\balpha^{(j)}\}_{j=1}^n$, the constraint set $\mathcal{T}_{++} = \{\by \in \R^\ell_{++} ~|~ v_j(\by) - w_j(\by) = 0, ~ j=n+1,\dots,\ell \}$ is specified via monomials $v_j(\by),w_j(\by), j=n+1,\dots,\ell$ with the property that each $v_j(\by)$ is a monomial consisting \emph{only} of the variable $\by_j$ (i.e., just $\by_j$ raised to a positive integer) and each $w_j(\by)$ is a monomial consisting of (a subset of) the variables $\by_1,\dots,\by_n$.  This follows directly from the assumption that $\{\balpha^{(j)}\}_{j=n+1}^\ell$ lie in the cone generated by $\{\balpha^{(j)}\}_{j=1}^n$, and from the construction of the monomials $v_j(\by),w_j(\by)$ preceding \eqref{eq:polyt}.  Consequently, one can check that for each point in $\mathcal{T}_+$ that lies on a face of the nonnegative orthant $\R^\ell_{+}$, the point can be approached as a limit of a sequence in $\mathcal{T}_{++}$.  Hence, $\mathrm{cl}(\mathcal{T}_{++}) = \mathcal{T}_+$.
\end{proof}

After the implicitization step and the appeal to Theorem~\ref{theorem:psatz} to obtain representations of positive polynomials, we finally derive a conclusion in terms of signomials as follows.  We apply the reverse transformation $\by_j \leftarrow \exp\{{\balpha^{(j)}}' \bx\}$ for each $j=1,\dots,\ell$, which is a valid parametrization of the system of polynomial equations $v_j(\by) - w_j(\by) = 0, j=n+1,\dots,\ell$ for $\by \in \R^\ell_{++}$.  The validity of this step relies on the fact that $\{\balpha^{(j)}\}_{j=1}^n$ are linearly independent.

\subsubsection{SAGE functions and redundant constraints as polynomials} \label{subsubsec:sageaspoly} The above discussion implies that a nonnegative signomial $\sum_{j=1}^\ell \bc_j \exp\{{\balpha^{(j)}}' \bx\}$ defined with respect to rational exponents $\{\balpha^{(j)}\}_{j=1}^\ell \subset \Q^n$ can also be viewed as a linear function $\bc' \by$ that is nonnegative over $\mathrm{cl}(\mathcal{T}_{++})$, where $\mathcal{T}_{++} = \{\by \in \R^\ell_{++} ~|~ v_j(\by) = w_j(\by), j=n+1,\dots,\ell \}$ is constructed as in the description preceding \eqref{eq:polyt}.  A SAGE function is simply a nonnegative linear function over $\mathrm{cl}(\mathcal{T}_{++})$ with an efficiently computable nonnegativity certificate.  In a similar vein, nonnegative signomials (SAGE functions) defined with respect to exponents in $E_p(\{\balpha^{(j)}\}_{j=1}^\ell)$ can be viewed as degree-$p$ polynomials that are nonnegative (efficiently certified as being nonnegative) over $\mathrm{cl}(\mathcal{T}_{++})$ and vice-versa.  As a result of this correspondence, we use overloaded terminology to refer to SAGE functions both in the domain of signomials (distinguished by the use of the variable $\bx$) and in the domain of polynomials (using the variable $\by$).  In an analogous manner, we also view elements of the set of redundant constraints $R_q(C)$ as degree-$q$ polynomials in the variables $\by_1,\dots,\by_\ell$ that are constrained to be nonnegative over $\mathrm{cl}(\mathcal{T}_{++})$.

\subsection{Proof of Theorem~\ref{theorem:con}} \label{app:conproof} Let $f(\bx) = \sum_{j=1}^\ell {\bc_j^{(f)}} \exp\{{\balpha^{(j)}}' \bx\}$ and let each $g_i(\bx) = \sum_{j=1}^\ell {\bc_j^{(g_i)}} \exp\{{\balpha^{(j)}}' \bx\}$ for $i=1,\dots,m$.  Set $\by_j = \exp\{{\balpha^{(j)}}' \bx\}$ for $j=1,\dots,\ell$.  The assumption that constraints of the form $U \geq \exp\{{\balpha^{(j)}}' \bx\} \geq L$ for each $j=1,\dots,\ell$ with $U,L \in \R_{++}$ are explicitly specified in $C$ implies that $\{\by \in \R^\ell ~|~ {\bc^{(g_i)}}' \by \geq 0, ~ i=1,\dots,m \} \subset \R^\ell_{++}$.  Hence, we appeal to Lemma~\ref{lemma:conimplicit} to conclude that $\inf_{\by \in \mathcal{T}_{++}} {\bc^{(f)}}' \by = \inf_{\bx \in \K_C} f(\bx) > 0$ is equivalent to $\inf_{\by \in \mathcal{T}_+} {\bc^{(f)}}' \by > 0$, where the set $\mathcal{T}_+ = \{\by \in \R^\ell ~|~ {\bc^{(g_i)}}' \by \geq 0, ~ i=1,\dots,m \} \cap \{\by \in \R^\ell_+ ~|~ v_j(\by) - w_j(\by) = 0, ~ j=n+1,\dots,\ell \}$ for suitable monomials $v_j,w_j$ \eqref{eq:polytclosure}.

Let $\tilde{P} \subset \R[\bY_1,\allowbreak\dots,\allowbreak\bY_\ell]$ be the preprime generated by $\R_+$, $\by_1,\dots,\by_\ell$, and ${\bc^{(g_1)}}' \by, \allowbreak \dots, \allowbreak {\bc^{(g_m)}}' \by$.  We note that $\tilde{P}$ is Archimedean as it contains $\by_j$ and $U - \by_j$ for each $j=1,\dots,\ell$, and hence the generators of the preprime are in the ring of bounded elements.  Consider the preprime $P = \tilde{P} + I$, where $I$ is the ideal generated by the polynomials $v_j(\by) - w_j(\by)$ for $j=n+1,\dots,\ell$.  As $\tilde{P}$ is Archimedean, so is $P$.

Next consider the set $\tilde{M}$ formed by taking finite sums of elements from the set $\{s(\by) h(\by) ~|~ s(\by) \in \sage(E_p(\{\balpha^{(j)}\}_{j=1}^\ell)), ~ h(\by) \in R_q(C), ~ p,q \in \mathbb{Z}_+\}$; see Section~\ref{subsubsec:sageaspoly} for discussion on this overloaded notation.  The set of SAGE functions is closed under addition, and also under multiplication by posynomials; thus, one can check that $\tilde{M}$ is a $\tilde{P}$-module.  Consequently, $M = \tilde{M} + I$ is a $P$-module.  Further, $M$ is Archimedean as $P$ is Archimedean.  Finally, we observe that the nonnegativity set $\K_{M} = \{\by \in \R^\ell ~|~ r(\by) \geq 0 ~ \mathrm{for~all}~ r \in M\}$ is equal to $\mathcal{T}_+$ as the constraints specified by the polynomials in $M$ are implied by those that specify the constraints in $\mathcal{T}_+$.  Hence, we apply Theorem~\ref{theorem:psatz} to conclude that ${\bc^{(f)}}' \by \in M$.  By substituting back $\by_j \leftarrow \exp\{{\balpha^{(j)}}' \bx\}$, the term corresponding to the ideal $I$ vanishes and we obtain the desired result. $\endproof$

\subsection{Proof of Theorem~\ref{theorem:unc}}  \label{app:uncproof} We begin with the following result on a constrained analog of Polya's theorem \cite{Mar2008,Pol1928} regarding representations of forms that are positive on the simplex; a generalization of this statement is proved in \cite{DicP2014}, but the specialization stated here suffices for our purposes.

\begin{proposition}\cite{DicP2014} \label{prop:unchom}
Let $u_k(\by)$ for $k=0,\dots,s$ be a collection of homogenous polynomials in $\R[\bY_1,\dots,\bY_\ell]$.  Let $\mathcal{V} = \{\by \in \R^\ell_+ ~|~ u_k(\by) = 0, k=1,\dots,s\}$.  If $u_0(\by) > 0$ for all $\by \in \mathcal{V} \backslash \{\bzero\}$, then there exists $p \in \mathbb{Z}_+$ such that $(\sum_{j=1}^\ell \by_i)^p u_0(\by) = \tau(\by) + \theta(\by)$ where $\tau(\by)$ is a polynomial consisting of all nonnegative coefficients and $\theta(\by)$ lies in the ideal generated by the polynomials $\{u_k(\by)\}_{k=1}^s$.
\end{proposition}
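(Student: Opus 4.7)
The plan is to reduce the constrained positivity of $u_0$ on $\mathcal{V}$ to the classical Polya theorem via a Lagrangian-style perturbation: I will add a large positive multiple of $\sum_k u_k^2$, suitably homogenized by powers of $\ones'\by$, so that the resulting polynomial becomes strictly positive on the entire simplex (not just on $\mathcal{V}$). Homogeneity is used twice: first to reduce to the simplex $\Delta = \{\by \in \R^\ell_+ \mid \ones'\by = 1\}$, on which $\mathcal{V} \cap \Delta$ is compact and $u_0 > 0$; and second to apply Polya's theorem in its standard form.

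Let $d_j = \deg u_j$ and fix an integer $D \geq \max\{d_0, 2d_1, \ldots, 2d_s\}$. The first technical step is to introduce
\begin{equation*}
F_N(\by) \;:=\; (\ones'\by)^{D-d_0}\, u_0(\by) \;+\; N \sum_{k=1}^s (\ones'\by)^{D-2d_k}\, u_k(\by)^2,
\end{equation*}
a homogeneous polynomial of degree $D$, and to show that $F_N > 0$ on $\Delta$ for $N$ sufficiently large. By continuity of $u_0$ and compactness of $\mathcal{V} \cap \Delta$ where $u_0 > 0$, there exist $\epsilon > 0$ and an open neighborhood $U \subset \Delta$ of $\mathcal{V} \cap \Delta$ with $u_0 \geq \epsilon$ on $U$; hence $F_N \geq \epsilon$ on $U$ regardless of $N$, since each summand $(\ones'\by)^{D-2d_k} u_k^2$ is nonnegative on $\Delta$. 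On the compact complement $\Delta \setminus U$, the quantity $\sum_k u_k^2$ is continuous and strictly positive (its zero set in $\Delta$ lies inside $U$), so it attains a positive minimum $\delta$, while $u_0$ is bounded below by some $-M$; choosing $N > M/\delta$ then forces $F_N > 0$ on $\Delta \setminus U$ as well.

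With $F_N$ homogeneous and strictly positive on $\Delta$, the classical theorem of Polya produces $p \in \mathbb{Z}_+$ such that all coefficients of $(\ones'\by)^p F_N(\by)$ are nonnegative. Expanding and solving for the $u_0$ term gives
\begin{equation*}
(\ones'\by)^{p+D-d_0}\, u_0(\by) \;=\; \underbrace{(\ones'\by)^p F_N(\by)}_{=:\tau(\by)} \;-\; N \sum_{k=1}^s (\ones'\by)^{p+D-2d_k}\, u_k(\by)^2,
\end{equation*}
where $\tau(\by)$ has only nonnegative coefficients by Polya, and the second summand (whose negative is $\theta(\by)$) lies in the ideal generated by $\{u_k\}_{k=1}^s$ since each of its terms factors as a polynomial multiple of some $u_k$. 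Relabeling the exponent $p + D - d_0$ as the $p$ in the statement of the proposition produces the claimed decomposition $(\ones'\by)^{p} u_0 = \tau + \theta$.

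The main obstacle is the choice of $N$ in the construction of $F_N$. It rests on two compactness facts: first, that $u_0 > 0$ on the compact set $\mathcal{V} \cap \Delta$ extends, by continuity, to a uniform positive lower bound on some open neighborhood $U$; second, that $\sum_k u_k^2$ has a strictly positive infimum on the compact complement $\Delta \setminus U$ because its zero locus within $\Delta$ is entirely contained in $U$. Once the existence of such an $N$ is established, the remaining steps are essentially mechanical: Polya's theorem handles the passage from pointwise positivity on $\Delta$ to a representation with nonnegative monomial coefficients, and the ideal membership of $\theta$ is manifest from how it was built.
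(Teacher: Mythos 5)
Your proof is correct, and it is worth noting that the paper itself contains no proof of Proposition~\ref{prop:unchom} to compare against: the proposition is invoked as a specialization of the constrained extension of Polya's theorem proved in \cite{DicP2014}. Your quadratic-penalty reduction to the classical (unconstrained) Polya theorem is a sound, self-contained derivation of exactly the specialization stated. The homogenization exponents $D-d_0$ and $D-2d_k$ make every term of $F_N$ a form of the common degree $D$; the two-region compactness argument (a relatively open $U \supset \mathcal{V}\cap\Delta$ on which $u_0 \geq \epsilon$, valid because $\mathcal{V}\cap\Delta$ is a closed subset of the compact simplex, and the compact complement $\Delta\setminus U$ on which $\sum_k u_k^2 \geq \delta > 0$ since its zero set in $\Delta$ is $\mathcal{V}\cap\Delta \subset U$) produces a single finite $N$ with $F_N > 0$ on $\Delta$, hence on $\R^\ell_+\setminus\{\bzero\}$ by homogeneity; classical Polya then gives $\tau = (\ones'\by)^p F_N$ with nonnegative coefficients, and $\theta = -N\sum_k (\ones'\by)^{p+D-2d_k} u_k^2$ is manifestly in the ideal generated by $\{u_k\}_{k=1}^s$, with the final relabeling of the exponent harmless (the degenerate situations $\mathcal{V}\cap\Delta = \emptyset$ or $\Delta\setminus U = \emptyset$ only make the choice of $N$ easier). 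What your route buys is a short elementary proof of precisely what the paper needs, resting only on the classical Polya theorem; what the citation to \cite{DicP2014} buys instead is a more general statement (with quantitative degree information in its original form), which the completeness argument of Theorem~\ref{theorem:unc} does not actually require.
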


We use this result after suitably transforming our unconstrained SP to a polynomial optimization problem over the nonnegative orthant.

\paragraph{Proof of Theorem~\ref{theorem:unc}} The main idea in this proof is to transform the signomial $f(\bx) = \sum_{j=1} \bc_j \exp\{{\balpha^{(j)}}' \bx\}$ appropriately so that we can apply Proposition~\ref{prop:unchom}.

The unconstrained SP $\inf_{\bx \in \R^n} f(\bx)$ can be transformed to $\inf_{\by \in \mathcal{T}_+} \bc' \by$ based on Lemma~\ref{lemma:uncimplicit}, where $\mathcal{T}_+ = \{\by \in \R^\ell_+ ~|~ v_j(\by) = w(\by), j=n+1,\dots,\ell\}$ and the construction of the monomials $v_j(\by),w_j(\by)$ is described in the discussion preceding \eqref{eq:polyt}.  We note that the monomials $v_j(\by),w_j(\by)$ for $j=n+1,\dots,\ell$ satisfy several properties:
\begin{enumerate}
\item[(i)] We have that $v_{n+1}(\by) = \by_{n+1}$ and $w_{n+1}(\by) = 1$ because $\balpha^{(n+1)} = \bzero$.

\item[(ii)] For each $j=n+2,\dots,\ell$, the monomial $v_j(\by)$ is \emph{only} a function of $\by_{j}$ and the monomial $w_j(\by)$ is \emph{only} a function of $\by_1,\dots,\by_n$.  This follows because each $\balpha^{(j)}$ is a convex combination with rational weights of the exponents $\{\balpha^{(i)}\}_{i=1}^{n+1}$ for $j=n+2,\dots,\ell$.

\item[(iii)] Finally, $\mathrm{deg}(v_j) > \mathrm{deg}(w_j)$ for each $j=n+2,\dots,\ell$, where $\mathrm{deg}(\cdot)$ denotes the degree of a polynomial.  This is a consequence of the assumption that $\balpha^{(j)} \notin \mathrm{conv}(\{\balpha^{(i)}\}_{i=1}^{n})$ for each $j=n+2,\dots,\ell$.
\end{enumerate}

Now define modified monomials:
\begin{equation}
\tilde{w}_j(\by) = w_j(\by) \by_{n+1}^{\mathrm{deg}(v_j)-\mathrm{deg}(w_j)}, ~ j=n+2,\dots,\ell, \label{eq:modifiedmon}
\end{equation}
and consider the set $\tilde{\mathcal{T}}_+ = \{\by \in \R^\ell_+ ~|~ v_j(\by) = \tilde{w}_j(\by), j=n+2,\dots,\ell\}$.  The difference between $\tilde{\mathcal{T}}_+$ and $\mathcal{T}_+$ is that each $w_j(\by)$ is replaced by $\tilde{w}_j(\by)$, and there is no constraint corresponding to $j=n+1$ (i.e., the constraint $\by_{n+1} = 1$ from property $(i)$ above is removed).  Further, note that the polynomials $v_j(\by) - \tilde{w}_j(\by)$ for $j=n+2,\dots,\ell$ are homogenous polynomials based on property $(iii)$ above.

As the polynomials $v_j(\by) - \tilde{w}_j(\by)$ for $j=n+2,\dots,\ell$ are homogenous, we intend to apply Proposition~\ref{prop:unchom} to the polynomial optimization problem $\inf_{\by \in \tilde{\mathcal{T}}_+} \bc' \by$.  In order to do so, we need to verify that $\bc' \by > 0$ for all $\by \in \tilde{\mathcal{T}}_+ \backslash \{\bzero\}$.

First, suppose that $\by \in \tilde{\mathcal{T}}_+ \backslash \{\bzero\}$ with $\by_{n+1} > 0$.  As the polynomials $v_j(\by) - \tilde{w}_j(\by)$ for $j=n+2,\dots,\ell$ and the linear function $\bc' \by$ are homogenous, we may scale $\by$ so that $\by_{n+1} = 1$ since this not affect our conclusions about the positivity of $\bc' \by$.  Setting $\by_{n+1}=1$ corresponds to verifying the positivity of $\bc' \by$ over $\mathcal{T}_+$.  As $\inf_{\by \in \mathcal{T}_+} \bc' \by = \inf_{\bx \in \R^n} f(\bx) > 0$, we have that $\bc' \by > 0$ for all $\by \in \tilde{\mathcal{T}}_+ \backslash \{\bzero\}$ whenever $\by_{n+1} \neq 0$.

Next, suppose that $\by \in \tilde{\mathcal{T}}_+ \backslash \{\bzero\}$ with $\by_{n+1} = 0$.  The definition of $\tilde{\mathcal{T}}_+$ directly implies that each $\by_j = 0$ for $j=n+2,\dots,\ell$, because $v_j(\by)$ for $j=n+2,\dots,\ell$ is only a monomial consisting of $\by_j$ raised to a positive integer (from property $(ii)$ above) and $\tilde{w}_j(\by)$ consists of $\by_{n+1}$ raised to a positive integer (from property $(ii)$ above and from \eqref{eq:modifiedmon}).  Consequently, we need to verify the positivity of $\bc' \by$ with $\by \in \R^\ell_+ \backslash \{\bzero\}$ and with $\by_j=0$ for $j=n+1,\dots,\ell$.  Since $\bc_j > 0$ for $j=1,\dots,n$ by assumption, it follows that $\bc' \by > 0$ for all $\by \in \tilde{\mathcal{T}}_+ \backslash \{\bzero\}$ with $\by_{n+1} = 0$.

Combining these observations, we appeal to Proposition~\ref{prop:unchom} to conclude that $(\sum_{j=1}^\ell \by_i)^p \bc' \by = \tau(\by) + \theta(\by)$ where $\tau(\by)$ is a polynomial consisting of all nonnegative coefficients and $\theta(\by)$ lies in the ideal generated by the polynomials $\{v_j(\by) - \tilde{w}_j(\by)\}_{j=n+2}^\ell$.  By substituting $\by_{n+1} \leftarrow 1$ and $\by_j \leftarrow \exp\{{\balpha^{(j)}}' \by\}$ for $j=n+2,\dots,\ell$, and by noting that posynomials are SAGE functions, we have the desired result. $\endproof$


\bibliography{signomial_refs}

\begin{thebibliography}{10}

\bibitem{BoyKPH2005}
S.~Boyd, S.~J. Kim, D.~Patil, and M.~Horowitz.
\newblock Digital {C}ircuit {O}ptimization via {G}eometric {P}rogramming.
\newblock {\em Operations Research}, 53:899--932, 2005.

\bibitem{BoyKVH2007}
S.~Boyd, S.~J. Kim, L.~Vandenberghe, and A.~Hassibi.
\newblock A {T}utorial on {G}eometric {P}rogramming.
\newblock {\em Optimization and Engineering}, 8:67--127, 2007.

\bibitem{BoyV2004}
S.~P. Boyd and L.~Vandenberghe.
\newblock {\em Convex {O}ptimization}.
\newblock Cambridge University Press, 2004.

\bibitem{Chi2005}
M.~Chiang.
\newblock Geometric {P}rogramming for {C}ommunication {S}ystems.
\newblock {\em Foundations and Trends in Communications and Information
  Theory}, 2:1--154, 2005.

\bibitem{ChiB2004}
M.~Chiang and S.~Boyd.
\newblock Geometric {P}rogramming {D}uals of {C}hannel {C}apacity and {R}ate
  {D}istortion.
\newblock {\em IEEE Transactions on Information Theory}, 50:245--258, 2004.

\bibitem{CovT2006}
T.~Cover and J.~Thomas.
\newblock {\em Elements of Information Theory}.
\newblock Wiley, 2006.

\bibitem{DicP2014}
P.~J.~C. Dickinson and J.~Povh.
\newblock On an {E}xtension of {P}olya's {P}ositivstellensatz.
\newblock {\em Journal of Global Opimization}, 2014.

\bibitem{DufP1973}
R.~J. Duffin and E.~L. Peterson.
\newblock Geometric {P}rogramming with {S}ignomials.
\newblock {\em Journal of Optimization Theory and Applications}, 11:3--35,
  1973.

\bibitem{DufPZ1967}
R.~J. Duffin, E.~L. Peterson, and C.~M. Zener.
\newblock {\em Geometric Programming: Theory and Application}.
\newblock John Wiley and Sons, 1967.

\bibitem{GhaM2012}
M.~Ghasemi and M.~Marshall.
\newblock Lower {B}ounds for {P}olynomials using {G}eometric {P}rogramming.
\newblock {\em SIAM Journal on Optimization}, 22:460--473, 2012.

\bibitem{Kho1991}
A.~G. Khovanskii.
\newblock {\em Fewnomials}.
\newblock American Mathematical Society, 1991.

\bibitem{Kri1964}
J.~L. Krivine.
\newblock Anneaux {P}réordonnés.
\newblock {\em Journal d'Analyse Mathématique}, 12:307--326, 1964.

\bibitem{Las2001}
J.~B. Lasserre.
\newblock Global {O}ptimization with {P}olynomials and the {P}roblem of
  {M}oments.
\newblock {\em SIAM Journal on Optimization}, 11:796--817, 2001.

\bibitem{LinSW2000}
N.~Linial, A.~Samorodnitsky, and A.~Wigderson.
\newblock A {D}eterministic {S}trongly {P}olynomial {A}lgorithm for {M}atrix
  {S}caling and {A}pproximate {P}ermanents.
\newblock {\em Combinatorica}, 20, 2000.

\bibitem{MarF1997}
C.~D. Maranas and C.~A. Floudas.
\newblock Global {O}ptimization in {G}eneralized {G}eometric {P}rogramming.
\newblock {\em Computers and Chemical Engineering}, 21:351--369, 1997.

\bibitem{Mar2008}
M.~Marshall.
\newblock {\em Positive Polynomials and Sums of Squares}.
\newblock American Mathematical Society, 2008.

\bibitem{Par2000}
P.~A. Parrilo.
\newblock {\em Structured {S}emidefinite {P}rograms and {S}emialgebraic
  {G}eometry {M}ethods in {R}obustness and {O}ptimization}.
\newblock PhD thesis, California Institute of Technology, 2000.

\bibitem{Par2003}
P.~A. Parrilo.
\newblock Semidefinite {P}rogramming {R}elaxations for {S}emialgebraic
  {P}roblems.
\newblock {\em Mathematical Programming}, 96:293--320, 2003.

\bibitem{Pet1976}
E.~L. Peterson.
\newblock Geometric {P}rogramming.
\newblock {\em SIAM Review}, 18:1--51, 1976.

\bibitem{Pol1928}
G.~Polya.
\newblock Uber {P}ositive {D}arstellung von {P}olynomen.
\newblock {\em Vierterhartschrift d. Naturforschenden Gessellschaft in Zurich},
  73:141--145, 1928.

\bibitem{Put1993}
M.~Putinar.
\newblock Positive {P}olynomials on {C}ompact {S}emi-algebraic {S}ets.
\newblock {\em Indiana University Mathematics Journal}, 42:969--984, 1993.

\bibitem{Rez1995}
B.~Reznick.
\newblock Uniform {D}enominators in {H}ilbert's {S}eventeenth {P}roblem.
\newblock {\em Mathematische Zeitschrift}, 220:75--97, 1995.

\bibitem{Roc1970}
R.~T. Rockafellar.
\newblock {\em Convex Analysis}.
\newblock Princeton University Press, 1970.

\bibitem{Sch1991}
K.~Schmudgen.
\newblock The {K}-moment {P}roblem for {C}ompact {S}emi-algebraic {S}ets.
\newblock {\em Annals of Mathematics}, 289:203--206, 1991.

\bibitem{She2005}
P.~Shen.
\newblock Linearization {M}ethod of {G}lobal {O}ptimization for {G}eneralized
  {G}eometric {P}rogramming.
\newblock {\em Applied Mathematics and Computation}, 162:353--370, 2005.

\bibitem{She1998}
H.~D. Sherali.
\newblock Global {O}ptimization of {N}onconvex {P}olynomial {P}rogramming
  {P}roblems having {R}ational {E}xponents.
\newblock {\em Journal of Global Optimization}, 12:267--283, 1998.

\bibitem{Sho1987}
N.~Z. Shor.
\newblock Class of {G}lobal {M}inimum {B}ounds of {P}olynomial {F}unctions.
\newblock {\em Cybernetics}, 23:731--734, 1987.

\bibitem{Ste1974}
G.~Stengle.
\newblock A {N}ullstellensatz and a {P}ositivstellensatz in {S}emialgebraic
  {G}eometry.
\newblock {\em Annals of Mathematics}, 207:87--97, 1974.

\bibitem{YunX1997}
K.~Yun and C.~Xi.
\newblock Second-order {M}ethod of {G}eneralized {G}eometric {P}rogramming for
  {S}patial {F}rame {O}ptimization.
\newblock {\em Computer Methods in Applied Mechanics and Engineering},
  141:117--123, 1997.

\end{thebibliography}

\end{document}